\documentclass{amsart}
\usepackage{amsfonts,amssymb,amsmath,amsthm}
\usepackage{url}
\usepackage{enumerate}
\usepackage{bbm}
\usepackage{amssymb}

\urlstyle{sf}
\newtheorem{thrm}{Theorem}[section]
\newtheorem{lem}[thrm]{Lemma}
\newtheorem{prop}[thrm]{Proposition}
\newtheorem{cor}[thrm]{Corollary}
\theoremstyle{definition}

\newtheorem{remark}[thrm]{Remark}
\numberwithin{equation}{section}

\newcommand{\N}{\ensuremath{{\mathbb N}}}

\newcommand{\R}{\ensuremath{{\mathbb R}}}
\newcommand{\E}{\ensuremath{{\mathbb E}}}
\newcommand{\norm}[1]{\left \lVert#1 \right\rVert}
\newcommand{\abs}[1]{\left\lvert#1 \right\rvert}
\newcommand{\skp}[1]{\left<#1\right>}

\author{David Alonso-Guti\'errez  \and Joscha Prochno}
\address[David Alonso-Guti\'errez]{Department of Mathematical and Statistical Sciences\\
University of Alberta\\
505 Central Academic Building\\
Edmonton T6G 2G1\\
Canada}
\email{alonsogu@ualberta.ca}
\address[Joscha Prochno]{Department of Mathematical and Statistical Sciences\\
University of Alberta\\
605 Central Academic Building\\
Edmonton T6G 2G1\\
Canada}
\email{prochno@ualberta.ca}
\thanks{~}

\keywords{Support Function, Random Polytope, Orlicz Norm, Mean Width}
\subjclass{Primary 52A22, Secondary 52A23, 05D40, 46B09}
\begin{document}

\title[Estimating Support Functions via Orlicz Norms]{Estimating Support Functions of Random Polytopes via Orlicz Norms}

\begin{abstract}
We study the expected value of support functions of random polytopes in a certain direction, where the random polytope is given by independent random vectors uniformly distributed in an isotropic convex body. All results are obtained by an utterly novel approach, using probabilistic estimates in connection with Orlicz norms that were not used in this connection before.\end{abstract}
\maketitle

\section{Introduction and Notation} \label{sect1}

The study of random polytopes began with Sylvester and the famous four-point problem nearly 150 years ago. It was the starting point for an extensive study. In their groundbreaking work \cite{key-RSu} from 1963, R\'enyi and Sulanke continued it, studying expectations of various basic functionals of random polytopes. Important quantities are expectations, variances and distributions of those functionals, and their study combines convex geometry, as well as geometric analysis and geometric probability (see also \cite{key-Ba}, \cite{key-Re}). 

In the last 30 years a tremendous effort was made to explore properties of random polytopes as they gained more and more importance due to many important applications and connections to various other fields. Those can be found not only in statistics (extreme points of random samples) and convex geometry (approximation of convex sets), but also in computer science in the analysis of the average complexity of algorithms (\cite{key-PSh}) and optimization (\cite{key-B}), and even in biology (\cite{key-S}). In 1989, Milman and Pajor revealed a deep connection to functional analysis, proving that the expected volume of a certain random simplex is closely related to the isotropic constant of a convex set. In fact, this is a fundamental quantity in convex geometry and the local theory of Banach spaces (\cite{key-MP}). 

Since Gluskin's result \cite{key-G} random polytopes are known to provide
many examples of convex bodies (and related normed spaces) with a
``pathologically  bad'' behavior of various parameters of a linear and
geometric nature (see for instance the survey \cite{key-mt} and references therein).
Consequently, they were also a natural candidate for a potential
counterexample for the hyperplane conjecture. The isotropic constant
of certain classes of random polytopes has been studied in
\cite{key-A}, \cite{key-DGT} and \cite{key-KK}, showing that they do
not provide a counterexample for the hyperplane conjecture.

Some other recent developments in the study of random polytopes can be
found in \cite{key-DGT} or \cite{key-PP}, where the authors studied
the relation between some parameters of a random polytope in an
isotropic convex body and the isotropic constant of the body. Their results provide sharp estimates whenever $n^{1+\delta} \leq N \leq
e^{\sqrt{n}}$ for some $\delta>0$. However, their method does not cover the case where $N\sim
n$ and it seems that a new approach is needed. Therefore, our paper serves this purpose, providing a new tool in the study of random polytopes where results are obtained for the range $n \leq N \leq e^{\sqrt{n}}$. More precisely, we will estimate the expected value of support functions of random polytopes for a fixed direction, using a representation of this parameter via Orlicz norms.

Even though the motivation is of a geometrical nature, the tools we use are mainly probabilistic and analytical, involving Orlicz norms and therefore spaces which naturally appear in Banach space theory. It is interesting that those spaces, as we will see, also naturally appear in the study of certain parameters of random polytopes. Hence, this interplay between convex geometry and classical Orlicz spaces is attractive both from the analytical as well as from the geometrical point of view.\\

Before stating the exact results, and to allow a better understanding, we start with some basic definitions before we go into detail. A convex body $K\subset \R^n$ is a compact convex set with non-empty interior. It is called symmetric if $-x\in K$, whenever $x\in K$. 
We will denote its volume (or Lebesgue measure) by $|\cdot |$. A convex body is said to be in isotropic position if it has volume $1$ and satisfies the following two conditions:
\begin{itemize}
\item$\int_Kxdx=0 \textrm{ (center of mass at 0)}$,
\item$\int_K\langle x,\theta\rangle^2dx=L_K^2\quad \forall \theta\in S^{n-1}$,
\end{itemize}
where $L_K$ is a constant independent of $\theta$, which is called the isotropic constant of $K$. Here, $\skp{\cdot,\cdot}$ denotes the standard scalar product in $\R^n$. 

We will use the notation $a\sim b$ to express that there exist two positive absolute constants $c_1, c_2$ such that $c_1a\leq b\leq c_2 a$ and use $a\sim_{\delta} b$ in case the constants depend on some constant $\delta>0$. Similarly, we write $a \lesssim b$ if there exists a positive absolute constant $c$ such that $a \leq c b$. The letters 
$c, c', C, C', c_1,c_2,\ldots$ will denote positive absolute constants, whose value may change from line to line. We will write $C(r)$ if the constant depends on some parameter $r>0$.

Let $K$ be a convex body and $\theta\in S^{n-1}$ a unit vector. The support function of $K$ in the direction $\theta$ is defined by $h_K(\theta)=\max\{\skp{x,\theta}: x\in K\}$. The mean width of $K$ is 
$$
w(K)=\int_{S^{n-1}}h_K(\theta)d\mu(\theta),
$$
where $d\mu$ denotes the uniform probability measure on $S^{n-1}$.

Given an isotropic convex body $K$, let us consider the random polytope $K_N=\textrm{conv}\{\pm X_1,\dots,\pm X_N\}$, where $X_1,\dots, X_N$ are independent random vectors uniformly distributed in $K$. It is known (see for instance \cite{key-DGT} or \cite{key-Pi}) that the expected value of the mean width of $K_N$ is bounded from above by
 $$
 \E w(K_N)\leq C L_K \sqrt{\log N},
 $$
where $C$ is a positive absolute constant. In \cite{key-DGT} the authors showed that if $N\leq e^{\sqrt n}$
$$
    \E \left( \frac{|K_N|}{|B_2^n|} \right)^{\frac{1}{n}} \geq C L_K \sqrt{\log\frac{N}{n}}.
$$
As a consequence they obtained
$$
    \E w(K_N) \sim_{\delta} L_K \sqrt{\log N},
$$
if the number of random points defining $K_N$ verifies $n^{1+\delta} \leq N \leq e^{\sqrt{n}}$, $\delta>0$ a constant. 

Now, let us be more precise and outline what we will prove and study in the following. First of all, by Fubini's Theorem, the expected value of the mean width of $K_N$ is the average on $S^{n-1}$ of the expected value of the support function of $K_N$ in the direction $\theta$:
\begin{equation}\label{EQU Average of Orlicz}
\E w(K_N)= \E\int_{S^{n-1}} h_{K_N}(\theta)d\mu=\int_{S^{n-1}}\E h_{K_N}(\theta)d\mu.
\end{equation}
Initially, in this paper we are interested in estimating $\E
h_{K_N}(\theta)=\E\max\limits_{1\leq i \leq N}|\langle
X_i,\theta\rangle|$ for a fixed direction $\theta\in S^{n-1}$, but we
will also derive ``high probability'' (in the set of directions) results. In order to do so, we establish a completely new approach applying probabilistic estimates in connection with Orlicz norms. Those were first studied by Kwapie\'n and Sch\"utt in the discrete case in \cite{key-KS1} and \cite{key-KS2}, and later extended by Gordon, Litvak, Sch\"utt and Werner in \cite{key-GLSW} and \cite{key-GLSW2} (for recent developments see also \cite{key-Pr2}, \cite{key-PrR} and \cite{key-PrS}). Using this method to estimate support functions of random polytopes is interesting in itself and introduces a new tool in convex geometry. \\
As we will see, the expected value of the mean width of a random polytope in (\ref{EQU Average of Orlicz}) is equivalent to an average of Orlicz norms, {\it i.e.},
  $$
    \E w(K_N) \sim \int_{S^{n-1}} \norm{(1,...,1)}_{M_{\theta}} d\mu(\theta).
  $$
This, in fact, is not just a nice representation, but a very interesting observation, which bears information concerning the expected value of the mean width, worth to be studied in more detail. Notice that averages of Orlicz norms naturally appear in Functional analysis when studying symmetric subspaces of the classical Banach space $L_1$ (see \cite{key-BDC}, \cite{key-KS1}, \cite{key-Pr} just to mention a few). To be more precise, as shown in \cite{key-KS1} every finite-dimensional symmetric subspace of $L_1$ is $C$-isomorphic to an average of Orlicz spaces (see \cite{key-RS} for the corresponding result for rearrangement invariant spaces). 

In Section \ref{Preliminaries} we will introduce the aforementioned Orlicz norm method that we will use throughout this paper to prove estimates for support functions of random polytopes.

In Section \ref{l_p^n balls}, with this approach, denoting by $e_j$ the canonical basis vectors in $\R^n$, we first compute $\E h_{K_N}(e_j)$ when the isotropic convex body in which $K_N$ lies is the normalized $\ell_p^n$ ball, {\it i.e.}, in $D_p^n=\frac{B_p^n}{|B_p^n|^\frac{1}{n}}$. Namely, using these ideas, we prove the following:

\begin{thrm}\label{mainTheorem}
Let $X_1,\dots, X_N$ be independent random vectors uniformly distributed in $D_p^n$, $1\leq p \leq \infty$, with $n\leq N\leq e^{c'n}$, and $K_N=\textrm{conv}\{\pm X_1,\ldots,\pm X_N\}$. Then, for all $j=1,\ldots,n$,
$$
\E h_{K_N}(e_j)=\E\max_{1\leq i\leq N}|\langle X_i,e_j\rangle|\sim (\log N)^\frac{1}{p}.
$$
\end{thrm}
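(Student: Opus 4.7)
The plan is to reduce $\E h_{K_N}(e_j) = \E \max_{1 \le i \le N}|\langle X_i, e_j\rangle|$ to a one-dimensional maximum of i.i.d.\ variables and then invoke the Orlicz norm representation announced in Section \ref{Preliminaries}. The random variables $Y_i := \langle X_i, e_j\rangle$ are i.i.d.\ with common density $g$ equal to the $j$-th marginal of the uniform law on $D_p^n$. Writing $\lambda_n := |B_p^n|^{-1/n} \sim n^{1/p}$, Fubini gives, for $p < \infty$,
$$
g(s) \;=\; \frac{|B_p^{n-1}|}{|B_p^n|^{(n-1)/n}}\Bigl(1-(|s|/\lambda_n)^p\Bigr)^{(n-1)/p}, \qquad |s| \le \lambda_n,
$$
with the obvious modification when $p = \infty$. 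A direct Stirling computation shows that the prefactor is uniformly of order one, so that for $|s|$ bounded away from $\lambda_n$ the density $g$ is comparable (with dimension-free constants) to the $p$-Gaussian density $c\, e^{-|s|^p/p}$.

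Next, the tool announced in Section \ref{Preliminaries} yields
$$
\E h_{K_N}(e_j) \;\sim\; \norm{(1,\ldots,1)}_{M_{e_j}},
$$
where $M_{e_j}$ is the Orlicz function attached to the symmetric density $g$. For the all-ones vector in $\R^N$ this Orlicz norm collapses to $1/M_{e_j}^{-1}(1/N)$, and $M_{e_j}$ is, up to one integration, comparable to the tail $\overline{G}(t) = \mathbb{P}(|Y_1| > t)$. Thus the theorem reduces to the sharp tail estimate
$$
c_1 e^{-C_1 t^p} \;\le\; \overline{G}(t) \;\le\; C_2 e^{-c_2 t^p}
$$
for $t$ in the range relevant to the maximum; inverting at level $1/N$ then produces $(\log N)^{1/p}$ as desired.

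To obtain the tail estimate, one substitutes $u = 1 - (s/\lambda_n)^p$ in the integral of $g$ from $t$ to $\lambda_n$ and combines the elementary inequalities $1-x \le e^{-x}$ and $1-x \ge e^{-x - Cx^2}$ (valid on bounded subintervals) to produce the bilateral exponential bound. The main technical obstacle is precisely the range of validity of this approximation: $(1-(s/\lambda_n)^p)^{(n-1)/p}$ must behave like $e^{-|s|^p/p}$ uniformly in $n$, for $|s|$ up to the expected maximum $(\log N)^{1/p}$. This is exactly where the hypothesis $N \le e^{c'n}$ enters, since it forces $(\log N)^{1/p} \le (c'n)^{1/p} \ll \lambda_n$, keeping us safely within the regime where the exponential approximation is accurate; the lower bound $N \ge n$ serves only to guarantee that $\log N$ is large enough for the leading-order exponential behaviour to dominate any $O(1)$ error terms.
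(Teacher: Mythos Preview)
Your strategy is sound and genuinely different from the paper's. The paper never reduces to a tail estimate on the one-dimensional marginal; instead it computes $M_{e_j}(1/s)$ \emph{exactly} via the trigonometric substitution $(\cos\theta)^{2/p}=|B_p^n|^{1/n}/t$, obtains two explicit representations (one useful for $p\ge 2$, one for $1\le p\le 2$), and then manipulates these with a recursion formula for $\int \sin^\alpha\theta\,\cos^\beta\theta\,d\theta$. The resulting case analysis (upper bound: $p\ge 2$ versus $1\le p\le 2$; lower bound: $p=1$, $1<p\le 2$, and $p\ge 2$ with further subcases in $p$ versus $n/\log N$) is long. Your route---recognise that the marginal density is, in the relevant range, uniformly comparable to $e^{-|s|^p/p}$, deduce two-sided bounds $e^{-Ct^p}\lesssim\overline G(t)\lesssim e^{-ct^p}$, and invert at level $1/N$---is the standard order-statistics argument and bypasses essentially all of that computation. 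What the paper's explicit approach buys is that it stays entirely inside the Orlicz framework it is advertising, while yours is shorter but makes the Orlicz representation somewhat incidental.

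Two places in your sketch need tightening. First, the sentence ``$M_{e_j}$ is, up to one integration, comparable to the tail $\overline G(t)$'' is doing too much work. The honest relation is $M'(t)=\E[|Y|\mathbf 1_{|Y|\ge 1/t}]$, which lies between $\tfrac{1}{t}\overline G(1/t)$ and $\lambda_n\,\overline G(1/t)$; the factor $\lambda_n\sim n^{1/p}$ is not harmless and you must either absorb it carefully into the exponential, or---cleaner---drop the detour through $M$ entirely and use directly that $\E\max_i|Y_i|\ge t\,(1-e^{-N\overline G(t)})$ and $\E\max_i|Y_i|\le t+\lambda_n N\overline G(t)$. Second, you should make explicit that all constants are uniform in $p$: the density comparison gives $e^{-|s|^p/p}$, so the inversion produces $(p\log N)^{1/p}$, and you need the elementary fact $p^{1/p}\le e^{1/e}$ to remove the $p$; and the case $p=\infty$ (uniform marginal on an interval of length $\sim 1$) falls outside your exponential-tail scheme and should be treated separately, as the paper also does.
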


Many properties of random variables distributed in $\ell_p^n$ balls have already been studied, 
see for instance \cite{key-BGMN}, \cite{key-SZ1} and \cite{key-SZ2}.

By rotational invariance in the Euclidean case, we obtain the same estimate for the expected value of the mean width of a random polytope in $D_2^n$, under milder conditions on the number of points $N$:

\begin{cor}\label{COR Mean width}
Let $X_1,\dots, X_N$ be independent random vectors uniformly distributed in $D_2^n$, with $n\leq N\leq e^n$ and let $K_N=\textrm{conv}\{\pm X_1,\ldots,\pm X_N\}$. Then
$$
\E w(K_N)\sim \sqrt{\log N}.
$$
\end{cor}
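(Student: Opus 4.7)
The plan is to exploit the full rotational symmetry of the Euclidean ball, which reduces the corollary to a single application of Theorem \ref{mainTheorem}. Since $D_2^n$ is invariant under every orthogonal map $U \in O(n)$, each $UX_i$ is again uniformly distributed in $D_2^n$, so $(UX_1,\dots,UX_N)$ has the same joint distribution as $(X_1,\dots,X_N)$. Hence $K_N$ is rotationally invariant in distribution, and for any $\theta\in S^{n-1}$ we have $\E h_{K_N}(\theta)=\E h_{K_N}(e_1)$. Plugging this into the Fubini identity (\ref{EQU Average of Orlicz}) collapses the average over the sphere:
$$
\E w(K_N)=\int_{S^{n-1}}\E h_{K_N}(\theta)\,d\mu(\theta)=\E h_{K_N}(e_1).
$$

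For $n\leq N\leq e^{c'n}$ the conclusion is then immediate from Theorem \ref{mainTheorem} applied with $p=2$ and $j=1$, giving $\E h_{K_N}(e_1)\sim \sqrt{\log N}$.

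To cover the remaining range $e^{c'n}< N\leq e^n$ I would split into upper and lower bounds. The upper bound $\E w(K_N)\lesssim L_{D_2^n}\sqrt{\log N}\sim \sqrt{\log N}$ is the general estimate recalled in Section \ref{sect1} (note $L_{D_2^n}$ is an absolute constant). For the lower bound, use monotonicity in $N$: setting $N_0=\lfloor e^{c'n}\rfloor$, the inclusion $K_{N_0}\subset K_N$ gives $h_{K_N}(e_1)\geq h_{K_{N_0}}(e_1)$ pointwise, and Theorem \ref{mainTheorem} applied to $N_0$ together with the equivalence $\log N\sim n$ throughout this range yields
$$
\E h_{K_N}(e_1)\geq \E h_{K_{N_0}}(e_1)\gtrsim \sqrt{\log N_0}\sim \sqrt{n}\sim \sqrt{\log N}.
$$

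The corollary is essentially a soft consequence of Theorem \ref{mainTheorem}, so there is no real obstacle; the only point requiring mild care is the enlarged parameter range, which the monotonicity trick handles. The conceptual content lies entirely in rotational invariance, which allows one to replace the integral over $S^{n-1}$ by its value at any coordinate direction, a step that is unavailable for $p\neq 2$ and is precisely the reason why the mean width, rather than only the support function in a fixed direction, can be controlled here.
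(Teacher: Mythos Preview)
Your argument is correct. The rotational-invariance reduction $\E w(K_N)=\E h_{K_N}(e_1)$ is exactly what the paper has in mind, and for $n\le N\le e^{c'n}$ you simply quote Theorem \ref{mainTheorem}.

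Where you differ from the paper is in handling the enlarged range $e^{c'n}<N\le e^n$. The paper does not spell out a proof, but its intended route is to observe that the restriction $N\le e^{c'n}$ in Theorem \ref{mainTheorem} arises only in the case analysis for large $p$; for $p\in(1,2]$ the lower-bound argument in Section \ref{l_p^n balls} explicitly uses only $N\le e^n$, and the upper-bound argument imposes no upper restriction on $N$ at all. So for $p=2$ the full range $n\le N\le e^n$ comes directly out of the proof. Your approach instead treats Theorem \ref{mainTheorem} as a black box and bridges the gap with the general upper bound $\E w(K_N)\lesssim L_K\sqrt{\log N}$ together with the coupling/monotonicity trick $\E h_{K_N}(e_1)\ge \E h_{K_{N_0}}(e_1)$ for $N_0=\lfloor e^{c'n}\rfloor$. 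This is a clean soft argument; its cost is a small dependence on an external estimate and on the implicit assumption $N_0\ge n$ (which holds once $n$ is large enough, the finitely many remaining cases being absorbed into the constants). The paper's route is more self-contained but requires reopening the proof of Theorem \ref{mainTheorem}.
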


In Section \ref{GeneralResult} we will use our approach to give a general upper bound for $\E h_{K_N}(\theta)$ when $K$ is symmetric and under some smoothness conditions on the function 
$h(t)=|K \cap \{ \langle x,\theta\rangle=t \} |^\frac{1}{n-1}$. This general case will include the case when $K=D_p^n$ with $2\leq p < \infty$ and $\theta=e_j$. \\
As proved in \cite{key-PP}, the expected value of the intrinsic volumes (in particular the mean width) of $K_N$ are minimized when $K=D_2^n$. Thus, we have $\E w(K_N)\gtrsim \sqrt{\log N}$ and $\E w(K_N)\sim L_K\sqrt{\log N}$ for those bodies with the isotropic constant bounded.
We prove the existence of directions such that the expected value of the support function in this directions is bounded from above by a constant times $L_K \sqrt{\log N}$ respectively bounded from below by a constant times $L_K \sqrt{\log N}$. In fact, as a consequence we estimate the measure of the set of directions verifying such estimates. It is stated in the following corollary. Notice that the constant $L_K$ appears explicitly also in the lower bound. 
\begin{cor}\label{COR measure estimate}
  Let $n \leq N \leq e^{\sqrt{n}}$, $K$ be an isotropic convex body in $\R^n$ and let $X_1,\ldots, X_N$ be independent random variables uniformly distributed on $K$. 
  Let $K_N=conv\{\pm X_1,\ldots,\pm X_N\}$. For every $r>0$ there exist positive constants $C(r),C_1(r),C_2(r)$ such that     
\begin{eqnarray*}
        \E h_{K_N}(\theta) &\leq&C_1(r) L_K \sqrt{\log
          N}\\
\E h_{K_N}(\theta) &\geq&C_2(r) L_K \sqrt{\log N}
    \end{eqnarray*}
  for a set of directions with measure greater than $
  1-\frac{1}{N^r}$ and $ \frac{C(r)\sqrt{\log N}}{N^r}$ respectively .        
\end{cor}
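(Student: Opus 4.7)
The plan is to set $f(\theta) := \E h_{K_N}(\theta) = \E \max_{1\leq i\leq N}|\langle X_i,\theta\rangle|$, so that $\int_{S^{n-1}} f\,d\mu = \E w(K_N)$, and then to transfer the averaged two-sided estimate $\E w(K_N) \sim L_K\sqrt{\log N}$ to pointwise estimates on the sphere. The upper bound on the mean width is the one recalled in the introduction; the matching lower bound, in the stated range, comes from combining the Paouris--Pivovarov minimality with the bound of Dafnis--Giannopoulos--Tsolomitis. The two inequalities of the corollary will then be obtained by two different principles applied to $f$ on $S^{n-1}$: concentration of measure for the upper statement, and a reverse H\"older inequality driven by a moment bound for the lower statement.

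For the upper bound I would first note that $h_{K_N}(\theta)-h_{K_N}(\theta') \leq \max_i\|X_i\|\,\|\theta-\theta'\|$, so that $f$ is Lipschitz on $S^{n-1}$ with $\operatorname{Lip}(f) \leq \E\max_{1\leq i\leq N}\|X_i\|$. Paouris' deviation inequality for isotropic log-concave vectors, combined with a union bound and the assumption $N\leq e^{\sqrt n}$, gives $\operatorname{Lip}(f) \lesssim \sqrt n\,L_K$. L\'evy's spherical isoperimetric inequality then yields
\[
\mu\bigl(\{\theta\in S^{n-1}:|f(\theta)-\E_\mu f|>t\}\bigr) \leq 2\exp\bigl(-c\,t^2/L_K^2\bigr),
\]
and the choice $t = C(r)L_K\sqrt{\log N}$ with $C(r)$ sufficiently large makes the right side at most $N^{-r}$. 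Combining with $\E_\mu f = \E w(K_N)\leq C L_K\sqrt{\log N}$ produces $f(\theta)\leq C_1(r)L_K\sqrt{\log N}$ on a set of measure at least $1-N^{-r}$.

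For the lower bound I will use a moment computation. For any $p\geq 1$, Jensen's inequality and Fubini give
\[
\int_{S^{n-1}} f(\theta)^p\,d\mu(\theta) \leq \sum_{i=1}^N\E\|X_i\|^p\int_{S^{n-1}}|\theta_1|^p\,d\mu(\theta),
\]
and combining the standard spherical moment $\int|\theta_1|^p\,d\mu \lesssim (p/n)^{p/2}$ with Paouris' bound $\E\|X_1\|^p\leq (C\sqrt n\,L_K)^p$ (valid for $p\leq c\sqrt n$) leads to $(\int f^p\,d\mu)^{1/p} \lesssim \sqrt p\,L_K\,N^{1/p}$. Splitting $\int f\,d\mu$ into the integrals over $A_t := \{f\geq t\}$ and $A_t^c$ with $t = \tfrac12\E w(K_N)$ and applying H\"older's inequality on $A_t$ delivers
\[
\mu(A_t)\geq\left(\frac{\E w(K_N)/2}{(\int f^p\,d\mu)^{1/p}}\right)^{\!p/(p-1)}\gtrsim\left(\frac{\sqrt{\log N}}{\sqrt p\,N^{1/p}}\right)^{\!p/(p-1)}.
\]
Taking $p = 1+1/r$ makes $p/(p-1) = 1+r$ and $1/(p-1) = r$, so the right hand side is bounded below by $C(r)(\log N)^{(1+r)/2}/N^r \geq C(r)\sqrt{\log N}/N^r$, which is the stated measure bound with $C_2(r) = c/2$ inherited from the lower bound on $\E w(K_N)$.

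The main obstacle will be to justify the averaged lower estimate $\E w(K_N)\gtrsim L_K\sqrt{\log N}$ throughout the full range $n\leq N\leq e^{\sqrt n}$: Dafnis--Giannopoulos--Tsolomitis yields only $L_K\sqrt{\log(N/n)}$, which degenerates when $N$ is close to $n$, whereas Paouris--Pivovarov gives $\sqrt{\log N}$ but without the factor $L_K$. Once an averaged lower bound of the correct order is at hand -- if necessary by restricting to $N$ mildly larger than $n$ or by combining the two sources carefully -- both the L\'evy concentration for the upper part and the H\"older/moment argument for the lower part proceed without further difficulty.
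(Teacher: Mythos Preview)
Your upper-bound argument is different from the paper's but correct. The paper does not use Lipschitz concentration; instead it goes through the Orlicz characterization $\E h_{K_N}(\theta)\sim\inf\{s:M_\theta(1/s)\le 1/N\}$ from Corollary~\ref{CorollaryOrlicz}. Theorem~\ref{THM general result 2} (with constants adjusted) gives $\int_{S^{n-1}}M_\theta(1/s_0)\,d\mu\le N^{-(r+1)}$ for $s_0\sim_r L_K\sqrt{\log N}$, and a straight Markov inequality then bounds $\mu\{M_\theta(1/s_0)>1/N\}\le N^{-r}$. Your L\'evy approach is a legitimate alternative and arguably more direct for this half.

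The lower-bound argument, however, has the gap you yourself flag, and it is fatal here. Your H\"older/moment scheme needs $\E w(K_N)\gtrsim L_K\sqrt{\log N}$ as input, but in the range $n\le N\le e^{\sqrt n}$ this is precisely the open point described in the introduction: \cite{key-DGT} only gives $L_K\sqrt{\log(N/n)}$, and \cite{key-PP} gives $\sqrt{\log N}$ without the $L_K$. If you feed in either of these, the threshold $t=\tfrac12\E w(K_N)$ in your set $A_t$ no longer sits at $c\,L_K\sqrt{\log N}$, so you do not recover the stated conclusion. The corollary is meant to be a step \emph{towards} that mean-width lower bound, not a consequence of it.

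The paper avoids this circularity by again working with $M_\theta$ rather than with $\E h_{K_N}(\theta)$ directly. The lower half of Theorem~\ref{THM general result 2} (proved via Paouris' small-ball estimate, with the constant tuned so that $c_6\beta<r$) yields $\int_{S^{n-1}}M_\theta(1/s_1)\,d\mu> N^{-r}$ for $s_1\sim_r L_K\sqrt{\log N}$, and this holds for all $n\le N\le e^{\sqrt n}$. One then needs to extract a measure lower bound for $\{M_\theta(1/s_1)\ge 1/N\}$. The paper does this by combining Jensen's inequality for a concave function with the uniform pointwise bound
\[
M_\theta\!\left(\tfrac{1}{s_1}\right)\le\int_0^{1/s_1}\!\!\int_K|\langle x,\theta\rangle|\,dx\,dt\le\frac{L_K}{s_1}\sim\frac{1}{\sqrt{\log N}},
\]
and a linear choice of the concave function then gives $\mu\{M_\theta(1/s_1)\ge 1/N\}\gtrsim_r \sqrt{\log N}\,N^{-r}$. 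The key point is that the input is an \emph{averaged Orlicz} inequality established independently, not the mean-width lower bound.
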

All the estimates we prove using our approach hold when $n\leq N\leq e^{\sqrt n}$. Thus, our method might provide a tool to prove $\E w(K_N) \sim L_K \sqrt{\log N}$ for this range of $N$ and hence close the gap mentioned in \cite{key-DGT}, where the authors' result was restricted to the case $n^{1+\delta} \leq N \leq e^{\sqrt{n}}$, $\delta>0$, and constants depending on $\delta$. 

\section{Preliminaries}\label {Preliminaries}

A convex function $M:[0,\infty)\to[0,\infty)$ where $M(0)=0$ and $M(t)>0$ for $t>0$ is called an {Orlicz function}. If there is a $t_0>0$ such that for
all $t\leq t_0$ we have $M(t)=0$ then $M$ is called a {degenerated Orlicz function}. The {dual function} $M^*$ of an Orlicz function $M$ is given by the
Legendre transform
  $$
    M^*(x) = \sup_{t\in[0,\infty)}(xt-M(t)).
  $$
Again, $M^*$ is an Orlicz function and $M^{**}=M$. For instance, taking $M(t)=\frac{1}{p}t^p$, $p\geq 1$, the dual function is given by $M^*(t)=\frac{1}{p^*}t^{p^*}$ with $\frac{1}{p^*}+\frac{1}{p}=1$
The $n$-dimensional {Orlicz space} $\ell_M^n$ is $\R^n$ equipped with the norm
  $$
    \norm{x}_M = \inf \left\{ \rho>0 \,:\, \sum_{i=1}^n M\left(\tfrac{\abs{x_i}}{\rho}\right) \leq 1 \right\}.
  $$
In case $M(t)=t^p$, $1\leq p<\infty$, we just have $\norm{\cdot}_M=\norm{\cdot}_p$. For a detailed and thorough introduction to the theory of Orlicz spaces we refer the reader to \cite{key-KR} and \cite{key-RR}.\\

In \cite{key-GLSW2} the authors obtained the following result:

\begin{thrm}[\cite{key-GLSW2} Lemma 5.2]\label{THM Schuett Werner Litvak Gordon}
Let $X_1,\ldots,X_N$ be iid random variables with finite first moments. For all $s\geq 0$ let
$$
  M(s)=\int\limits_0\limits^s \int_{\{\frac{1}{t}\leq |X_1|\}} |X_1| d\mathbb P dt.
$$
Then, for all $x=(x_i)_{i=1}^N\in\R^N$,
$$
\mathbb E \max\limits_{1\leq i \leq N}|x_iX_i|\sim \norm{x}_M.
$$
\end{thrm}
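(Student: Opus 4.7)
The plan is to express both sides of the equivalence in terms of the tail function
$$F(t) := \sum_{i=1}^N \mathbb{P}(|x_iX_i|>t)$$
and match them via a single Fubini computation. To see where $M$ originates, I would first swap the order of integration in its defining formula to obtain
$$M(s) = \E\bigl[(s|X_1|-1)_+\bigr] = s\int_{1/s}^\infty \mathbb{P}(|X_1|>v)\,dv,$$
so that $\rho\, M(|x_i|/\rho) = \E(|x_iX_i|-\rho)_+$ for any $\rho>0$. Writing $\rho^* := \norm{x}_M$, the defining relation $\sum_{i=1}^N M(|x_i|/\rho^*)=1$ then translates, after another application of Fubini, to the clean identity
$$\int_{\rho^*}^\infty F(t)\,dt = \rho^*.$$

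For the upper bound I would use the pointwise estimate $\max_i|x_iX_i|\leq \rho^* + \sum_i(|x_iX_i|-\rho^*)_+$, take expectations, and invoke the identity above to obtain $\E\max_i|x_iX_i|\leq 2\rho^*$ immediately. For the lower bound I would start from the layer-cake representation
$$\E\max_i|x_iX_i| = \int_0^\infty\Bigl(1-\prod_{i=1}^N\bigl(1-q_i(t)\bigr)\Bigr)dt,\qquad q_i(t)=\mathbb{P}(|x_iX_i|>t),$$
and apply the elementary two-sided estimate $(1-e^{-1})\min(1,\sum_i p_i)\leq 1-\prod_i(1-p_i)\leq \min(1,\sum_i p_i)$, which reduces the claim to
$$\int_0^\infty \min\bigl(1,F(t)\bigr)\,dt \sim \rho^*.$$

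Both directions of this reduced statement follow from the identity above by splitting the range of integration. For the upper direction, $\int_0^{\rho^*}\min(1,F)\,dt\leq \rho^*$ is trivial and $\int_{\rho^*}^\infty \min(1,F)\,dt\leq \int_{\rho^*}^\infty F\,dt = \rho^*$. For the lower direction, a short case distinction on whether the level $\tau:=\inf\{t:F(t)\leq 1\}$ lies above or below $\rho^*$ yields $\int_0^\infty \min(1,F)\,dt\geq \rho^*$: if $\tau\geq \rho^*$ then $\int_0^\tau 1\,dt = \tau\geq\rho^*$, and otherwise $F\leq 1$ on $[\rho^*,\infty)$, so $\int_{\rho^*}^\infty \min(1,F) = \int_{\rho^*}^\infty F = \rho^*$. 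The main obstacle I anticipate is verifying that the infimum in the definition of $\norm{x}_M$ is actually attained, so that the Orlicz relation holds with equality rather than merely $\leq 1$; this requires a short continuity argument and a little care when $M$ is degenerate at the origin. Once this is handled, the rest is routine bookkeeping with monotone tail functions.
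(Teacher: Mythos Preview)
The paper does not prove this theorem; it is quoted verbatim as Lemma~5.2 of \cite{key-GLSW2} and used as a black box (only the corollary for the vector $(1,\dots,1)$ is needed). So there is nothing in the present paper to compare your argument against.

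That said, your argument is correct and self-contained. The identification $M(s)=\E\bigl[(s|X_1|-1)_+\bigr]$ via Fubini is exactly right, and from it the key identity $\int_{\rho^*}^\infty F(t)\,dt=\rho^*$ drops out immediately. Your upper bound $\E\max\le 2\rho^*$ via $\max_i a_i\le \rho^*+\sum_i(a_i-\rho^*)_+$ is the standard one-line trick, and the lower bound through $1-\prod(1-p_i)\ge(1-e^{-1})\min(1,\sum p_i)$ together with the $\tau$ case split is clean. The only genuine caveat you flag---attainment of the infimum with equality $\sum_i M(|x_i|/\rho^*)=1$---is harmless here: $M$ is continuous, $M(0)=0$, and $M(s)\to\infty$ whenever $\mathbb P(|X_1|>0)>0$, so $\rho\mapsto\sum_i M(|x_i|/\rho)$ is continuous, tends to $0$ at $\infty$ and to $\infty$ at $0^+$ (for $x\neq 0$), and the intermediate value theorem gives equality at $\rho^*$. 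The degenerate case $X_1=0$ a.s.\ is trivial. Your proof thus gives explicit constants $1-e^{-1}$ and $2$, which is in line with what the cited reference provides.
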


Obviously, the function
\begin{equation}\label{EQU Orlicz function M}
  M(s)=\int_0^s \int_{\{\frac{1}{t}\leq |X_1|\}} |X_1| d\mathbb P dt
\end{equation}
is non-negative and convex, since $\int_{\{\frac{1}{t}\leq \abs{X}\}}\abs{X}d\mathbb P$ is increasing in $t$. Furthermore, we have
$M(0)=0$ and $M$ is continuous. One can easily show, that this Orlicz function $M$ can also be written in
the following way:
  $$
    M(s) = \int_0^s \left(\tfrac{1}{t} \mathbb P (\abs{X}\geq \tfrac{1}{t}) + \int_{\frac{1}{t}}^{\infty} \mathbb P (\abs{X}\geq u) du \right) dt.
  $$
  
As a corollary we obtain the following result, which is the one we use to estimate the support functions of random polytopes.

\begin{cor}\label{CorollaryOrlicz}
Let $X_1,\ldots,X_N$ be iid random vectors in $\R^n$ and let $K_N=\textrm{conv}\{\pm X_1,\ldots,\pm X_N\}$. Let $\theta\in S^{n-1}$ and
$$
  M_{\theta}(s)=\int_0^s \int_{\{\frac{1}{t}\leq |\langle X_1,\theta\rangle|\}} |\langle X_1,\theta\rangle| d\mathbb P dt.
$$
Then
$$
\E h_{K_N}(\theta)\sim\inf \left\{ s>0 \,:\, M_{\theta}\left(\tfrac{1}{s}\right) \leq \tfrac{1}{N} \right\}.
$$
\end{cor}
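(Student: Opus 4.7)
The plan is to reduce the corollary directly to Theorem \ref{THM Schuett Werner Litvak Gordon} by an appropriate choice of coefficient vector, and then to unwind the definition of the Orlicz (Luxemburg) norm to identify it with the infimum appearing on the right-hand side.

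First I would observe that since $K_N = \mathrm{conv}\{\pm X_1, \ldots, \pm X_N\}$ is symmetric, the support function in the direction $\theta$ is simply
$$
h_{K_N}(\theta) = \max_{1 \leq i \leq N} \langle X_i, \theta \rangle = \max_{1 \leq i \leq N} |\langle X_i, \theta \rangle|.
$$
Next I would set $Y_i := \langle X_i, \theta \rangle$ for $i = 1, \ldots, N$. Since the $X_i$ are iid and $\theta$ is fixed, the $Y_i$ are iid real-valued random variables, and (assuming $X_1$ is integrable, as is the case when $X_1$ is uniform on a convex body) have finite first moments. The Orlicz function associated with $Y_1$ in the sense of Theorem \ref{THM Schuett Werner Litvak Gordon} is precisely
$$
M(s) = \int_0^s \int_{\{\frac{1}{t} \leq |Y_1|\}} |Y_1|\, d\mathbb{P}\, dt = M_\theta(s).
$$

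Applying Theorem \ref{THM Schuett Werner Litvak Gordon} with the vector $x = (1, 1, \ldots, 1) \in \R^N$ then yields
$$
\E h_{K_N}(\theta) = \E \max_{1 \leq i \leq N} |Y_i| \sim \|(1, \ldots, 1)\|_{M_\theta}.
$$
Now I would simply unwind the definition of the Luxemburg norm: by definition
$$
\|(1, \ldots, 1)\|_{M_\theta} = \inf\left\{ \rho > 0 : \sum_{i=1}^N M_\theta\!\left(\tfrac{1}{\rho}\right) \leq 1 \right\} = \inf\left\{ \rho > 0 : M_\theta\!\left(\tfrac{1}{\rho}\right) \leq \tfrac{1}{N} \right\},
$$
which matches the right-hand side of the claim after renaming $\rho = s$.

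There is essentially no obstacle; the only subtlety is to justify that $Y_1$ has a finite first moment (so that Theorem \ref{THM Schuett Werner Litvak Gordon} applies), which in the setting of interest (say, $X_1$ uniform on a bounded convex body) follows from $|Y_1| \leq \|X_1\|_2$ being bounded. One may also want to remark that if $\mathbb{P}(Y_1 = 0) > 0$ then $M_\theta$ may be degenerate near $0$, but the argument above goes through unchanged since the Luxemburg norm is well defined for degenerate Orlicz functions as well.
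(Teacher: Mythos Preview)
Your proof is correct and follows exactly the approach intended by the paper, which states the result as an immediate corollary of Theorem~\ref{THM Schuett Werner Litvak Gordon} applied with $x=(1,\ldots,1)$ (cf.\ the discussion around (\ref{EQU Average of Orlicz})). One trivial slip: the intermediate expression $\max_{1\le i\le N}\langle X_i,\theta\rangle$ in your first display should read $\max_{y\in K_N}\langle y,\theta\rangle$; the final identity $h_{K_N}(\theta)=\max_{1\le i\le N}|\langle X_i,\theta\rangle|$ is of course what you want.
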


\section{Random Polytopes in Normalized $\ell_p^n$-Balls}\label{l_p^n balls}

In this section we consider random polytopes $K_N=\textrm{conv}\{\pm X_1,\ldots,\pm X_N\}$, where $X_1,\dots,X_N$ are independent random vectors uniformly distributed in the normalized $\ell_p^n$ ball $D_p^n=\frac{B_p^n}{|B_p^n|^\frac{1}{n}}$. Let us recall that the volume of $B_p^n$ equals
$$
 |B_p^n| = \frac{(\Gamma(1+\tfrac{1}{p}))^n}{\Gamma(1+\tfrac{n}{p})},
$$
 and so, using Stirling's formula, we have that $|B_p^n|^{1/n} \sim \frac{1}{n^\frac{1}{p}}$ and $\frac{|B_p^{n-1}|}{|B_p^n|}\sim n^\frac{1}{p}$.

We are going to estimate $\E h_{K_N}(e_j)$ using the Orlicz norm approach introduced in Section \ref{Preliminaries}. In order to do so, we need to compute the Orlicz function $M$ introduced in Corollary \ref{CorollaryOrlicz}. We are doing this in the following.

\begin{lem}\label{Mlpballs}
Let $1\leq p <\infty$ and $M:[0,\infty)\to[0,\infty)$ be the function
$$
 M(s):= M_{e_j}(s)=\int_0^s \int_{\{x\in D_p^n\,:\,|\langle x,e_j\rangle|\geq\frac{1}{t}\}} |\langle x,e_j\rangle| dx dt.
$$
Then, if $s\leq\frac{1}{|B_p^n|^\frac{1}{n}}$,
\begin{eqnarray}\label{firstexpressionforM}
M\left(\frac{1}{s}\right)&=&\frac{4}{p(n-1+p)}\frac{|B_p^{n-1}|}{|B_p^n|}\int_0^{\cos^{-1}(s|B_p^n|^{\frac{1}{n}})^\frac{p}{2}}\frac{(\sin\theta)^{2\frac{n-1}{p}+3}}{(\cos\theta)^{3-\frac{2}{p}}}d\theta\cr
    &+&\frac{4(2-p)}{p(n-1+p)}\frac{|B_p^{n-1}|}{|B_p^n||B_p^n|^\frac{p-2}{n}} \times \cr
    & \times &
    \int_0^{\cos^{-1}(s|B_p^n|^{\frac{1}{n}})^\frac{p}{2}}\frac{\sin\theta}{(\cos\theta)^{1+\frac{2}{p}}}\int_{(\cos\theta)^\frac{2}{p}|B_p^n|^{-\frac{1}{n}}}^{|B_p^n|^{-\frac{1}{n}}}r^{1-p}(1-|B_p^n|^\frac{p}{n}r^p)^{\frac{n-1}{p}+1}drd\theta.\cr
& &
\end{eqnarray}
Also, if $s\leq\frac{1}{|B_p^n|^\frac{1}{n}}$,
\begin{eqnarray}\label{secondexpressionforM}
M\left(\frac{1}{s}\right)&=&\frac{2}{(n-1+p)\left(n-1+2p\right)}\frac{|B_p^{n-1}|}{|B_p^n|}\frac{(1-s^p|B_p^n|^\frac{p}{n})^{\frac{n-1}{p}+2}}{(s^p|B_p^n|^\frac{p}{n})^{2-\frac{1}{p}}}\cr
&-&\frac{12(p-1)}{p(n-1+p)(n-1+2p)}\frac{|B_p^{n-1}|}{|B_p^n|}\int_0^{\cos^{-1}(s|B_p^n|^{\frac{1}{n}})^\frac{p}{2}}\frac{(\sin\theta)^{2\frac{n-1}{p}+5}}{(\cos\theta)^{5-\frac{2}{p}}}d\theta\cr
&-&\frac{8(2-p)(p-1)}{p\left(n-1+2p\right)(n-1+p)}\frac{|B_p^{n-1}|}{|B_p^n||B_p^n|^\frac{2p-2}{n}}\times\cr
&\times&\int_0^{\cos^{-1}(s|B_p^n|^{\frac{1}{n}})^\frac{p}{2}}\frac{\sin\theta}{(\cos\theta)^{1+\frac{2}{p}}}\int_{(\cos\theta)^\frac{2}{p}|B_p^n|^{-\frac{1}{n}}}^{|B_p^n|^{-\frac{1}{n}}}r^{1-2p}(1-|B_p^n|^\frac{p}{n}r^p)^{\frac{n-1}{p}+2}drd\theta.\cr
& &
\end{eqnarray}
\end{lem}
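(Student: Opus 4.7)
The plan is to reduce the Orlicz function, originally a double integral over $D_p^n$, to an explicit one-dimensional integral by exploiting the symmetry of $D_p^n$ in the direction $e_j$, and then to obtain (\ref{firstexpressionforM}) and (\ref{secondexpressionforM}) by two successive integrations by parts. Throughout I set $R := |B_p^n|^{-1/n}$ so that $D_p^n = R\cdot B_p^n$.

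\emph{Reduction to a one-dimensional integral.} Since $|D_p^n|=1$, the cross-section $D_p^n\cap\{x_j = u\}$ has $(n-1)$-volume $|B_p^{n-1}|(R^p - |u|^p)^{(n-1)/p}$ for $|u|\leq R$, and this is therefore the density of $\langle X_1,e_j\rangle$. Plugging this into the definition of $M$ and exchanging the order of integration via Fubini (after substituting $u=1/t$ on the outer variable) yields, for every $s\leq R$,
\[
M(1/s) \;=\; 2|B_p^{n-1}|\int_s^R\frac{1}{u^2}\int_u^R v(R^p - v^p)^{(n-1)/p}\,dv\,du.
\]

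\emph{First integration by parts, giving (\ref{firstexpressionforM}).} I would integrate the inner integral by parts by splitting $v(R^p-v^p)^{(n-1)/p} = v^{2-p}\cdot v^{p-1}(R^p-v^p)^{(n-1)/p}$ and using the primitive $-\frac{1}{n-1+p}(R^p-v^p)^{(n-1+p)/p}$ of the second factor. The boundary value, once carried back into the outer integral, becomes the single integral $\int_s^R u^{-p}(R^p - u^p)^{(n-1+p)/p}\,du$; the remainder retains a double-integral structure with integrand $v^{1-p}(R^p-v^p)^{(n-1+p)/p}$. Applying the trigonometric substitution $u = R(\cos\theta)^{2/p}$ (under which $1-(u/R)^p = \sin^2\theta$, $du/u^2 = -\frac{2}{pR}\sin\theta/(\cos\theta)^{1+2/p}d\theta$, the upper endpoint $u=R$ maps to $\theta=0$, and $u=s$ to $\theta=\cos^{-1}(s|B_p^n|^{1/n})^{p/2}$) to the outer variable in both pieces, then collecting the powers of $R$ via $R^n|B_p^n|=1$, produces (\ref{firstexpressionforM}).

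\emph{Second integration by parts, giving (\ref{secondexpressionforM}).} I would now integrate by parts once more, in parallel, in both integrals of (\ref{firstexpressionforM}). In the single integral, factor $u^{-p} = u^{1-2p}\cdot u^{p-1}$ and use the primitive $-\frac{1}{n-1+2p}(R^p-u^p)^{(n-1+2p)/p}$: the boundary value at $u=s$ becomes, after extracting $R^n=|B_p^n|^{-1}$, the explicit first term of (\ref{secondexpressionforM}), while the residual integrand $u^{-2p}(R^p-u^p)^{(n-1+2p)/p}$ becomes under the same trigonometric substitution $(\sin\theta)^{2(n-1)/p+5}/(\cos\theta)^{5-2/p}$. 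For the inner integral of the double integral, the analogous splitting $v^{1-p} = v^{2-2p}\cdot v^{p-1}$ produces a boundary term at $v = R(\cos\theta)^{2/p}$ which is a second copy of the same trigonometric integral, and a residual double integral of $r^{1-2p}(1-|B_p^n|^{p/n}r^p)^{(n-1)/p+2}$ which matches the third term of (\ref{secondexpressionforM}).

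\emph{Main obstacle.} The only genuine difficulty is arithmetic bookkeeping. Signs and the powers of $R=|B_p^n|^{-1/n}$ must be tracked carefully through two substitutions and three integrations by parts, and one has to verify that the two copies of the trigonometric integral arising at the last step, coming respectively from the single and the double integral with coefficients proportional to $-(2p-1)$ and $(2-p)$, combine via the identity $-4(2p-1)+4(2-p) = -12(p-1)$ into the coefficient $-\frac{12(p-1)}{p(n-1+p)(n-1+2p)}\frac{|B_p^{n-1}|}{|B_p^n|}$ appearing in (\ref{secondexpressionforM}).
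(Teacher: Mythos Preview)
Your proposal is correct and follows essentially the same route as the paper's proof: compute the cross-section density, reduce $M(1/s)$ to a double integral, integrate by parts in the inner variable via the splitting $v = v^{2-p}\cdot v^{p-1}$, and apply the substitution $u = R(\cos\theta)^{2/p}$ (which is exactly the paper's $|B_p^n|^{1/n}/t = (\cos\theta)^{2/p}$ after your preliminary change $u=1/t$). The second round of integrations by parts and the combination $-4(2p-1)+4(2-p)=-12(p-1)$ that you identify are precisely what the paper does to pass from (\ref{firstexpressionforM}) to (\ref{secondexpressionforM}); the only differences are cosmetic (your abbreviation $R=|B_p^n|^{-1/n}$ and the early substitution $u=1/t$).
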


\begin{proof}
The $(n-1)$-dimensional volume $|D_p^n\cap\{ \langle x,e_j\rangle=t\}|$ equals
$$
\tfrac{|B_p^{n-1}||B_p^n|^{1/n}}{|B_p^n|}(1-|B_p^n|^{p/n}t^p)^{\frac{n-1}{p}} \mathbbm 1_{[-|B_p^n|^{-1/n},|B_p^n|^{-1/n}]}(t).
$$
By Fubini's Theorem we have that if $s\geq |B_p^n|^{1/n}$
  \begin{eqnarray*}
    M(s) & = & 2\frac{|B_p^{n-1}||B_p^n|^\frac{1}{n}}{|B_p^n|} \int_{|B_p^n|^{1/n}}^{s} \int_{\frac{1}{t}}^{|B_p^n|^{-1/n}} r \left( 1-|B_p^n|^{p/n}r^p \right)^{\frac{n-1}{p}} dr dt \\
    & = & 2\frac{|B_p^{n-1}||B_p^n|^\frac{1}{n}}{|B_p^n|}\int_{|B_p^n|^{1/n}}^s\int_{\frac{1}{t}}^{|B_p^n|^{-1/n}}\frac{r^{p-1}}{r^{p-2}}\left( 1-|B_p^n|^{p/n}r^p \right)^{\frac{n-1}{p}} dr dt.\\
  \end{eqnarray*}
Otherwise $M$ is $0$. Integration by parts yields
  \begin{eqnarray*}
    M(s) & = & 2\frac{|B_p^{n-1}||B_p^n|^\frac{1}{n}}{|B_p^n||B_p^n|^\frac{p}{n}} \int_{|B_p^n|^{\frac{1}{n}}}^{s} \left[
         \tfrac{\left(\tfrac{1}{t}\right)^{2-p}\left(1-\tfrac{|B_p^n|^{\frac{p}{n}}}{t^p} \right)^{\frac{n-1}{p}+1}}{n-1+p} + \right. \\
         && \left. + \int_{\frac{1}{t}}^{|B_p^n|^{-\frac{1}{n}}}(2-p)r^{1-p}
         \tfrac{\left(1-|B_p^n|^{\frac{p}{n}}r^p \right)^{\frac{n-1}{p}+1}}{n-1+p}dr\right]dt. \\
  \end{eqnarray*}
  Now, making the change of variables
  $$
    \frac{|B_p^n|^{\frac{1}{n}}}{t} = (\cos\theta)^{\frac{2}{p}} \Longrightarrow \frac{dt}{d\theta}
    =|B_p^n|^{\frac{1}{n}}\frac{2}{p}\frac{\sin\theta}{(\cos\theta)^{1+\frac{2}{p}}},
  $$
  we obtain
    \begin{eqnarray*}
    M(s)&=&\frac{4}{p(n-1+p)}\frac{|B_p^{n-1}|}{|B_p^n|}\int_0^{\cos^{-1}(s^{-1}|B_p^n|^{\frac{1}{n}})^\frac{p}{2}}\frac{(\sin\theta)^{2\frac{n-1}{p}+3}}{(\cos\theta)^{3-\frac{2}{p}}}d\theta\\
    &+&\frac{4(2-p)}{p(n-1+p)}\frac{|B_p^{n-1}|}{|B_p^n||B_p^n|^\frac{p-2}{n}}\times \\
    & \times & \int_0^{\cos^{-1}(s^{-1}|B_p^n|^{\frac{1}{n}})^\frac{p}{2}}\frac{\sin\theta}{(\cos\theta)^{1+\frac{2}{p}}}\int_{(\cos\theta)^\frac{2}{p}|B_p^n|^{-\frac{1}{n}}}^{|B_p^n|^{-\frac{1}{n}}}r^{1-p}(1-|B_p^n|^\frac{p}{n}r^p)^{\frac{n-1}{p}+1}drd\theta.
  \end{eqnarray*}
Therefore
  \begin{eqnarray*}
    M\left(\frac{1}{s}\right)  &=& \frac{4}{p(n-1+p)}\frac{|B_p^{n-1}|}{|B_p^n|}\int_0^{\cos^{-1}(s|B_p^n|^{\frac{1}{n}})^\frac{p}{2}}\frac{(\sin\theta)^{2\frac{n-1}{p}+3}}{(\cos\theta)^{3-\frac{2}{p}}}d\theta\\
    &+&\frac{4(2-p)}{p(n-1+p)}\frac{|B_p^{n-1}|}{|B_p^n||B_p^n|^\frac{p-2}{n}} \times \\
    & \times & \int_0^{\cos^{-1}(s|B_p^n|^{\frac{1}{n}})^\frac{p}{2}}\frac{\sin\theta}{(\cos\theta)^{1+\frac{2}{p}}}\int_{(\cos\theta)^\frac{2}{p}|B_p^n|^{-\frac{1}{n}}}^{|B_p^n|^{-\frac{1}{n}}}r^{1-p}(1-|B_p^n|^\frac{p}{n}r^p)^{\frac{n-1}{p}+1}drd\theta
  \end{eqnarray*}
if $s\leq \frac{1}{|B_p^n|^{\frac{1}{n}}}$ and 0 otherwise, which is the expression in (\ref{firstexpressionforM}).
The first term in the previous sum equals
$$
\frac{4}{p(n-1+p)}\frac{|B_p^{n-1}|}{|B_p^n|}\int_0^{\cos^{-1}(s|B_p^n|^{\frac{1}{n}})^\frac{p}{2}}\frac{(\sin\theta)^{2\frac{n-1}{p}+3}(\cos\theta)}{(\cos\theta)^{4-\frac{2}{p}}}d\theta
$$
and integration by parts yields that this equals
\begin{eqnarray*} \tfrac{2}{p(n-1+p)\left(\frac{n-1}{p}+2\right)}\frac{|B_p^{n-1}|}{|B_p^n|}\left(\tfrac{(1-s^p|B_p^n|^\frac{p}{n})^{\frac{n-1}{p}+2}}{(s^p|B_p^n|^\frac{p}{n})^{2-\frac{1}{p}}}
-\left(4-\frac{2}{p}\right)\int_0^{\cos^{-1}(s|B_p^n|^{\frac{1}{n}})^\frac{p}{2}}\frac{(\sin\theta)^{2\frac{n-1}{p}+5}}{(\cos\theta)^{5-\frac{2}{p}}}d\theta\right).
\end{eqnarray*}
The integral inside the second term equals
\begin{eqnarray*}
\int_{(\cos\theta)^\frac{2}{p}|B_p^n|^{-\frac{1}{n}}}^{|B_p^n|^{-\frac{1}{n}}}r^{1-p}(1-|B_p^n|^\frac{p}{n}r^p)^{\frac{n-1}{p}+1}dr=\int_{(\cos\theta)^\frac{2}{p}|B_p^n|^{-\frac{1}{n}}}^{|B_p^n|^{-\frac{1}{n}}}r^{2-2p}r^{p-1}(1-|B_p^n|^\frac{p}{n}r^p)^{\frac{n-1}{p}+1}dr
\end{eqnarray*}
and, integrating by parts, this equals
\begin{eqnarray*}
\tfrac{1}{p\left(\frac{n-1}{p}+2\right)|B_p^n|^\frac{p}{n}}\left(\tfrac{(\sin\theta)^{2\frac{n-1}{p}+4}|B_p^n|^\frac{2p-2}{n}}{(\cos\theta)^{4-\frac{4}{p}}}-2(p-1)\int_{(\cos\theta)^\frac{2}{p}|B_p^n|^{-\frac{1}{n}}}^{|B_p^n|^{-\frac{1}{n}}}r^{1-2p}(1-|B_p^n|^\frac{p}{n}r^p)^{\frac{n-1}{p}+2}dr\right)
\end{eqnarray*}
and so, the second term above equals
\begin{eqnarray*}
&&
\frac{4(2-p)}{p\left(n-1+2p\right)(n-1+p)}\frac{|B_p^{n-1}|}{|B_p^n|}\int_0^{\cos^{-1}(s|B_p^n|^{\frac{1}{n}})^\frac{p}{2}}\frac{(\sin\theta)^{2\frac{n-1}{p}+5}}{(\cos\theta)^{5-\frac{2}{p}}}d\theta\cr
&-&\frac{8(2-p)(p-1)}{p\left(n-1+2p\right)(n-1+p)}\frac{|B_p^{n-1}|}{|B_p^n||B_p^n|^\frac{2p-2}{n}}\times\cr
&\times&\int_0^{\cos^{-1}(s|B_p^n|^{\frac{1}{n}})^\frac{p}{2}}\frac{\sin\theta}{(\cos\theta)^{1+\frac{2}{p}}}\int_{(\cos\theta)^\frac{2}{p}|B_p^n|^{-\frac{1}{n}}}^{|B_p^n|^{-\frac{1}{n}}}r^{1-2p}(1-|B_p^n|^\frac{p}{n}r^p)^{\frac{n-1}{p}+2}drd\theta.
\end{eqnarray*}
Thus, adding the two terms we have that if $s\leq\frac{1}{|B_p^n|^\frac{1}{n}}$
\begin{eqnarray*}
M\left(\frac{1}{s}\right)&=&\frac{2}{(n-1+p)\left(n-1+2p\right)}\frac{|B_p^{n-1}|}{|B_p^n|}\frac{(1-s^p|B_p^n|^\frac{p}{n})^{\frac{n-1}{p}+2}}{(s^p|B_p^n|^\frac{p}{n})^{2-\frac{1}{p}}}\cr
&-&\frac{12(p-1)}{p(n-1+p)(n-1+2p)}\frac{|B_p^{n-1}|}{|B_p^n|}\int_0^{\cos^{-1}(s|B_p^n|^{\frac{1}{n}})^\frac{p}{2}}\frac{(\sin\theta)^{2\frac{n-1}{p}+5}}{(\cos\theta)^{5-\frac{2}{p}}}d\theta\cr
&-&\frac{8(2-p)(p-1)}{p\left(n-1+2p\right)(n-1+p)}\frac{|B_p^{n-1}|}{|B_p^n||B_p^n|^\frac{2p-2}{n}}\times\cr
&\times&\int_0^{\cos^{-1}(s|B_p^n|^{\frac{1}{n}})^\frac{p}{2}}\frac{\sin\theta}{(\cos\theta)^{1+\frac{2}{p}}}\int_{(\cos\theta)^\frac{2}{p}|B_p^n|^{-\frac{1}{n}}}^{|B_p^n|^{-\frac{1}{n}}}r^{1-2p}(1-|B_p^n|^\frac{p}{n}r^p)^{\frac{n-1}{p}+2}drd\theta,
\end{eqnarray*}
which is the expression in (\ref{secondexpressionforM})
\end{proof}

Now we are going to prove Theorem \ref{mainTheorem}. It will be a consequence of the next two propositions, where we will prove the upper and lower bound for $\E h_{K_N}(e_j)$ respectively.

\begin{prop}
  For every $n,N\in\N$, with $n\leq N$, and every $1\leq p < \infty$ we have that if
  $X_1,\ldots, X_N$  are random vectors uniformly distributed in $D_p^n$ then
    $$
      \mathbb E \max_{1\leq i \leq N} \abs{\skp{X_i,e_j}} \lesssim \left(\log N\right)^\frac{1}{p}.
      $$
\end{prop}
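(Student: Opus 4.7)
The plan is to apply Corollary~\ref{CorollaryOrlicz} with $\theta=e_j$, reducing the inequality to showing that for a sufficiently large constant $C=C(p)$, the choice $s=C(\log N)^{1/p}$ satisfies $M(1/s)\leq 1/N$, where $M=M_{e_j}$ is the Orlicz function of Lemma~\ref{Mlpballs}. Since $|\langle X_i,e_j\rangle|\leq |B_p^n|^{-1/n}\sim n^{1/p}$ almost surely, the claim is trivial whenever $(\log N)^{1/p}\gtrsim n^{1/p}$, so I may restrict to the range $n\leq N\leq e^{c_0 n}$ with $c_0$ small enough that $s|B_p^n|^{1/n}\leq 1$; there the formulas of Lemma~\ref{Mlpballs} apply. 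In this regime $y_0:=s^p|B_p^n|^{p/n}\sim c\log N/n$, where $c$ is proportional to $C^p$, and therefore $(1-y_0)^{(n-1)/p+2}\lesssim e^{-(c/p)\log N}=N^{-c/p}$, with $c$ freely tunable by enlarging $C$.

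The argument then splits according to the sign pattern of the summands in the two representations of $M(1/s)$, using that in each case exactly the troublesome terms have the right sign to be simply discarded.

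\textbf{Case $1\leq p\leq 2$.} Here I would use (\ref{secondexpressionforM}). The coefficients $-12(p-1)/p$ and $-8(2-p)(p-1)/p$ of the second and third summands are both non-positive on this interval, so those two summands may be dropped. Inserting $|B_p^{n-1}|/|B_p^n|\sim n^{1/p}$, $(n-1+p)(n-1+2p)\sim n^2$, $y_0\sim c\log N/n$ and $(1-y_0)^{(n-1)/p+2}\lesssim N^{-c/p}$ into the first summand yields a bound of order $N^{-c/p}(\log N)^{-(2-1/p)}$, which is at most $1/N$ as soon as $c\geq p$.

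\textbf{Case $p>2$.} Here I would use (\ref{firstexpressionforM}). Now it is the coefficient $4(2-p)/p$ of the second summand that is negative, so that summand may be dropped. After the substitution $y=\cos^2\theta$ the remaining integral becomes $\tfrac{1}{2}\int_{y_0}^{1}(1-y)^{(n-1)/p+1}\,y^{-(2-1/p)}\,dy$. Applying the uniform crude bound $(1-y)^{(n-1)/p+1}\leq (1-y_0)^{(n-1)/p+1}\lesssim N^{-c/p}$ on $[y_0,1]$ together with $\int_{y_0}^{1}y^{-(2-1/p)}\,dy\lesssim y_0^{1/p-1}$ and the prefactor of order $n^{1/p-1}$ produces a bound of order $N^{-c/p}(\log N)^{-(1-1/p)}$, again at most $1/N$ for $c\geq p$.

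The main obstacle is the estimation of the explicit integral in the case $p>2$, but in fact no delicate pointwise analysis is needed: discarding the negative second summand of (\ref{firstexpressionforM}) leaves a single positive integral on which the crude uniform estimate $(1-y)^{(n-1)/p+1}\leq (1-y_0)^{(n-1)/p+1}$ suffices, since the resulting $N$-dependence $N^{-c/p}$ can be pushed below $1/N$ by enlarging $C$, and any polynomial loss in $\log N$ is harmless. Thus the key structural point of the proof is not the integral estimate itself but rather the observation that for each range of $p$ one of the two representations of $M(1/s)$ in Lemma~\ref{Mlpballs} has precisely the right signs for the complicated terms to drop out.
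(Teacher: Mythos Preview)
Your approach is correct and essentially identical to the paper's: the same case split, the same key observation that in each range of $p$ one of the two representations in Lemma~\ref{Mlpballs} has its complicated summands nonpositive so they may be discarded, and the same exponential estimate on the surviving term. For $p>2$ the paper reaches the bound on the remaining integral by a single integration by parts (dropping the resulting negative boundary term) rather than your substitution $y=\cos^2\theta$ followed by the crude bound $(1-y)^{(n-1)/p+1}\le(1-y_0)^{(n-1)/p+1}$; both routes land on the same estimate.

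One point deserves care: you write $C=C(p)$, but the statement (with $\lesssim$) asks for an absolute constant. Your sentence ``$y_0\sim c\log N/n$ with $c$ proportional to $C^p$'' implicitly uses $|B_p^n|^{p/n}\sim 1/n$, and the implicit constants here, being $p$-th powers of absolute constants, are not uniformly bounded in $p$. The paper sidesteps this by choosing $s_0=|B_p^n|^{-1/n}\bigl(\tfrac{\alpha}{2}\min\{\tfrac{p\log N}{n-1+p},1\}\bigr)^{1/p}$, so that $y_0=s_0^p|B_p^n|^{p/n}\le\tfrac12$ holds exactly; this gives $s_0\lesssim (p\log N)^{1/p}=p^{1/p}(\log N)^{1/p}$, and then one only needs that $p^{1/p}$ is bounded. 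Your argument can be repaired the same way.
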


\begin{proof}
  If $p\geq2$ and $s\leq \frac{1}{|B_p^n|^{\frac{1}{n}}}$, the second term in the expression of $M\left(\frac{1}{s}\right)$ given by (\ref{firstexpressionforM}) is negative and so
$$
    M\left(\frac{1}{s}\right) \leq  \frac{4}{p(n-1+p)} \frac{|B_p^{n-1}|}{|B_p^n|}
    \int_0^{\cos^{-1}(s|B_p^n|^{\frac{1}{n}})^\frac{p}{2}} \frac{(\sin\theta)^{\frac{2(n-1)}{p}+3}}{(\cos\theta)^{3-\frac{2}{p}}} d\theta.
$$
  Integration by parts gives
  \begin{eqnarray*}
    M\left(\frac{1}{s}\right) & \leq & \frac{4|B_p^{n-1}|}{p(n-1+p)|B_p^n|} \left[
    \frac{(\sin\theta)^{\frac{2(n-1)}{p}+2}}{(2-\frac{2}{p})(\cos\theta)^{2-\frac{2}{p}}}\big|_0^{\cos^{-1}(s|B_p^n|^{\frac{1}{n}})^\frac{p}{2}} \right. \\
    && - \left. \frac{\frac{2(n-1)}{p}+2}{2-\frac{2}{p}} \int_0^{\cos^{-1}(s|B_p^n|^{\frac{1}{n}})^\frac{p}{2}}
    \frac{(\sin\theta)^{\frac{2(n-1)+1}{p}+1}}{(\cos\theta)^{1-\frac{2}{p}}} d\theta \right] \\
    & \leq & \frac{2|B_p^{n-1}|}{(p-1)(n-1+p)|B_p^n|} \frac{(1-s^p|B_p^n|^{\frac{p}{n}})^{\frac{n-1}{p}+1}}{(s|B_p^n|^{\frac{1}{n}})^{p-1}} \\
    & = & \frac{2|B_p^{n-1}|}{(p-1)(n-1+p)|B_p^n|}\frac{1}{(s|B_p^n|^{\frac{1}{n}})^{p-1}} e^{\frac{n-1+p}{p}\log(1-s^p|B_p^n|^{\frac{p}{n}})}.
  \end{eqnarray*}
  Taking $s_0=\frac{1}{2^\frac{1}{p}|B_p^n|^\frac{1}{n}}\min\left\{\alpha\left(\frac{p}{n-1+p}\right)(\log N),1\right\}^\frac{1}{p}$, $\alpha>0$ to be specified later. Since $s_0^p|B_p^n|^\frac{p}{n}\leq\frac{1}{2}$, there exists a constant $c$ such that
 $$
 M\left(\frac{1}{s_0}\right)\leq\frac{2|B_p^{n-1}|}{(p-1)(n-1+p)|B_p^n|}\frac{1}{(s_0|B_p^n|^{\frac{1}{n}})^{p-1}} e^{-cs_0^p|B_p^n|^\frac{p}{n}\frac{n-1+p}{p}}.
$$
Take $\alpha=\frac{2}{c}$. If the minimum in the definition of $s_0$ is $\frac{1}{2}$ then trivially we have
  $$
  \mathbb E \max_{1\leq i \leq N} \abs{\skp{X_i,e_1}} \leq\frac{1}{|B_p^n|^\frac{1}{n}}.
  $$
\bigskip
If not, then
$$
M\left(\frac{1}{s_0}\right)\leq\frac{2|B_p^{n-1}|e^{-\log N}}{(p-1)(n-1+p)|B_p^n|\left(\frac{1}{c}\frac{p}{n-1+p}\log N\right)^\frac{p-1}{p}}
$$
Since $|B_p^{n-1}|/|B_p^{n}| \sim n^{1/p}$, we get
  \begin{eqnarray*}
    M\left(\frac{1}{s_0}\right) & \leq & \tfrac{Cn^{1/p}}{p^2(n-1+p)^\frac{1}{p}} \frac{1}{  \left(
    \log N\right)^{1-\frac{1}{p}} N } \\
    & =& \tfrac{C}{p^2(1+\frac{p-1}{n})^\frac{1}{p}} \frac{1}{  \left(
    \log N\right)^{1-\frac{1}{p}} N }\leq \frac{1}{N}
  \end{eqnarray*}
  when $N\geq N_0$ for some sufficiently large $N_0\in\N$. Altogether, for $p\geq 2$, we obtain
  $$
    \mathbb E h_{K_N}(e_1) = \mathbb E \max_{1\leq i \leq N} \abs{\skp{X_i,e_1}}
    \leq \frac{C}{|B_p^n|^\frac{1}{n}}\min\left\{\left(\frac{p}{n-1+p}\right)(\log  N),1\right\}^\frac{1}{p},
  $$
  where $C$ is an absolute positive constant.
This minimum is 1 if and only if $\log N\geq 1+\frac{n-1}{p}$. In this case the upper bound we obtain is $\frac{C}{|B_p^n|^\frac{1}{n}}\sim Cn^\frac{1}{p}$. Since $n-1\leq p\log N$ we have that the upper bound $Cn^\frac{1}{p}\leq C(\log N)^\frac{1}{p}$.
If the minimum is not 1, since $|B_p^n|^\frac{1}{n}\sim \frac{1}{n^{\frac{1}{p}}}$, we also obtain an upper bound of the order $(\log N)^\frac{1}{p}$.\\
If $p\in[1,2]$ we use that in the representation of $M\left(\frac{1}{s}\right)$ given by (\ref{secondexpressionforM}) only the first term is positive and so
\begin{eqnarray*}
M\left(\frac{1}{s}\right)&\leq&\frac{2}{(n-1+p)\left(n-1+2p\right)}\frac{|B_p^{n-1}|}{|B_p^n|}\frac{(1-s^p|B_p^n|^\frac{p}{n})^{\frac{n-1}{p}+2}}{(s^p|B_p^n|^\frac{p}{n})^{2-\frac{1}{p}}}\cr
&=&\frac{2}{(n-1+p)\left(n-1+2p\right)}\frac{|B_p^{n-1}|}{|B_p^n|}\frac{e^{\frac{n-1+2p}{p}\log\left(1-s^p|B_p^n|^\frac{p}{n}\right)}}{(s|B_p^n|^\frac{1}{n})^{2p-1}}.\cr
\end{eqnarray*}
Taking $s_0=\frac{1}{2^\frac{1}{p}|B_p^n|^\frac{1}{n}}\min\left\{\alpha\left(\frac{p}{n-1+2p}\right)(\log N),1\right\}^\frac{1}{p}$, $\alpha>0$ to be specified later. Since $s_0^p|B_p^n|^\frac{p}{n}\leq\frac{1}{2}$, there exists a constant such that
$$
 M\left(\frac{1}{s_0}\right)\leq\frac{2}{(n-1+p)(n-1+2p)}\frac{|B_p^{n-1}|}{|B_p^n|}\frac{1}{(s_0|B_p^n|^{\frac{1}{n}})^{2p-1}} e^{-cs_0^p|B_p^n|^\frac{p}{n}\frac{n-1+2p}{p}}.
$$
Take $\alpha=\frac{2}{c}$. If the minimum in the definition of $s_0$ is $\frac{1}{2}$ then trivially we have
  $$
  \mathbb E \max_{1\leq i \leq N} \abs{\skp{X_i,e_1}} \leq\frac{1}{|B_p^n|^\frac{1}{n}}.
  $$
\bigskip
If not, then
$$
M\left(\frac{1}{s_0}\right)\leq\frac{2e^{-\log N}}{(n-1+p)(n-1+2p)}\frac{|B_p^{n-1}|}{|B_p^n|}\frac{1}{\left(\frac{1}{c}\frac{p}{n-1+2p}\log N\right)^\frac{2p-1}{p}}.
$$
Since $|B_p^{n-1}|/|B_p^{n}| \sim n^{1/p}$ and $p\in [1,2]$, we get
  \begin{eqnarray*}
    M(\tfrac{1}{s_0}) & \leq & \tfrac{C}{(\log N)^{2-\frac{1}{p}}N}\leq\frac{1}{N}
  \end{eqnarray*}
  when $N\geq N_0$ for some sufficiently large $N_0\in\N$.  Altogether, for $1\leq p\leq 2$, we obtain
  $$
    \mathbb E h_{K_N}(e_1) = \mathbb E \max_{1\leq i \leq N} \abs{\skp{X_i,e_1}}
    \leq \frac{C}{|B_p^n|^\frac{1}{n}}\min\left\{\frac{\log  N}{n},1\right\}^\frac{1}{p}\leq C(\log N)^\frac{1}{p},
  $$
  where $C$ is an absolute positive constant.
\end{proof}

In order to prove the lower bound for $\E h_{K_N}(e_j)$ we need the two following technical results:

\begin{lem}\label{LEM General Recursion for Sin}
  Let $\alpha,\beta \in\R\setminus\{-1\}$. Then we have
    $$
      \int \sin^{\alpha}(\theta) \cos^{\beta}(\theta) d\theta = \frac{\sin^{\alpha+1}(\theta) \cos^{\beta+1}(\theta)}{\alpha +1}
      + \tfrac{\alpha + \beta + 2}{\alpha +1} \int \sin^{\alpha+2}(\theta) \cos^{\beta}(\theta) d\theta.
    $$
\end{lem}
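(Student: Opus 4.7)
The plan is to verify this reduction formula by differentiating the right-hand side's boundary term and then tidying with the Pythagorean identity. Concretely, I would compute
\[
\bigl(\sin^{\alpha+1}(\theta)\cos^{\beta+1}(\theta)\bigr)'
=(\alpha+1)\sin^{\alpha}(\theta)\cos^{\beta+2}(\theta)-(\beta+1)\sin^{\alpha+2}(\theta)\cos^{\beta}(\theta),
\]
which is valid for all real $\alpha,\beta$ on any interval where the factors are defined.

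Next I would replace $\cos^{\beta+2}(\theta)=\cos^{\beta}(\theta)-\sin^{2}(\theta)\cos^{\beta}(\theta)$ in the first summand, so that the derivative becomes
\[
(\alpha+1)\sin^{\alpha}(\theta)\cos^{\beta}(\theta)-(\alpha+\beta+2)\sin^{\alpha+2}(\theta)\cos^{\beta}(\theta).
\]
Integrating this identity term by term gives
\[
\sin^{\alpha+1}(\theta)\cos^{\beta+1}(\theta)=(\alpha+1)\!\int\!\sin^{\alpha}(\theta)\cos^{\beta}(\theta)\,d\theta-(\alpha+\beta+2)\!\int\!\sin^{\alpha+2}(\theta)\cos^{\beta}(\theta)\,d\theta,
\]
and dividing through by $\alpha+1\neq 0$ yields the claim.

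There is essentially no obstacle here: the identity is a one-line integration by parts in disguise, and the hypothesis $\alpha\neq -1$ is exactly what is needed to divide at the end, while $\beta\neq -1$ never actually intervenes in this derivation (it only becomes relevant if one wants the dual recursion reducing the exponent of $\cos$). I would simply note this observation in passing and present the three displays above as the proof.
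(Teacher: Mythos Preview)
Your proof is correct and is at bottom the same integration-by-parts computation as the paper's, just organized differently. The paper integrates $\int \sin^{\alpha+2}\theta\,\cos^{\beta}\theta\,d\theta$ by parts with $u=\sin^{\alpha+1}\theta$ and $dv=\sin\theta\cos^{\beta}\theta\,d\theta$, which produces an intermediate factor of $\tfrac{1}{\beta+1}$ that cancels at the end; you instead differentiate the boundary term $\sin^{\alpha+1}\theta\,\cos^{\beta+1}\theta$ directly and never divide by $\beta+1$. Your arrangement is slightly cleaner and, as you observe, shows that the hypothesis $\beta\neq -1$ is not actually needed for this particular recursion --- the paper's version uses it only because of the way the parts are split.
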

\begin{proof}
  We consider $\int \sin^{\alpha+2}(\theta) \cos^{\beta}(\theta) d\theta$. Integration by parts yields
    \begin{eqnarray*}
      \int \sin^{\alpha+2}(\theta) \cos^{\beta}(\theta) d\theta & = & - \frac{\sin^{\alpha+1}(\theta) \cos^{\beta+1}(\theta)}{\beta +1}
      + \tfrac{\alpha+1}{\beta+1} \int \sin^{\alpha}(\theta) \cos^{\beta+2}(\theta) d\theta.
    \end{eqnarray*}
  Since $\cos^{\beta+2}(\theta) = \cos^{\beta}(\theta)(1-\sin^2(\theta))$, we obtain
     \begin{eqnarray*}
      && \int \sin^{\alpha+2}(\theta) \cos^{\beta}(\theta) d\theta \\
      & = & - \frac{\sin^{\alpha+1}(\theta) \cos^{\beta+1}(\theta)}{\beta +1}
      + \tfrac{\alpha+1}{\beta+1} \int \sin^{\alpha}(\theta) \cos^{\beta}(\theta) d\theta
      -\tfrac{\alpha+1}{\beta+1} \int \sin^{\alpha+2}(\theta) \cos^{\beta}(\theta) d\theta.
    \end{eqnarray*}
  Thus
    $$
      \tfrac{\alpha+\beta+2}{\beta+1} \int \sin^{\alpha+2}(\theta) \cos^{\beta}(\theta) d\theta
      = - \frac{\sin^{\alpha+1}(\theta) \cos^{\beta+1}(\theta)}{\beta +1} + \tfrac{\alpha+1}{\beta+1} \int \sin^{\alpha}(\theta) \cos^{\beta}(\theta) d\theta,
    $$
  and so
    $$
      \int \sin^{\alpha}(\theta) \cos^{\beta}(\theta) d\theta = \frac{\sin^{\alpha+1}(\theta) \cos^{\beta+1}(\theta)}{\alpha +1}
      + \tfrac{\alpha + \beta + 2}{\alpha +1} \int \sin^{\alpha+2}(\theta) \cos^{\beta}(\theta) d\theta.
    $$
\end{proof}

As a corollary we obtain the $k$-th iteration of Lemma \ref{LEM General Recursion for Sin}.

\begin{cor}\label{COR General Recursion for Sin k steps}
  Let $\alpha,\beta \in\R\setminus\{-1\}$. Then, for any $k\in\N$, we have
    \begin{eqnarray*}
      \int \sin^{\alpha}(\theta) \cos^{\beta}(\theta) d\theta & = & \frac{\sin^{\alpha+1}(\theta) \cos^{\beta+1}(\theta)}{\alpha +1}
      + \frac{\alpha+\beta+2}{(\alpha +1)(\alpha+3)}\sin^{\alpha+3}(\theta) \cos^{\beta+1}(\theta)+ \\
      &+& \frac{(\alpha+\beta+2)(\alpha+\beta+4)}{(\alpha +1)(\alpha+3)(\alpha+5)}\sin^{\alpha+5}(\theta) \cos^{\beta+1}(\theta)+\ldots+\\
      &+& \frac{(\alpha+\beta+2)\cdots(\alpha+\beta+2k)}{(\alpha +1)\cdots(\alpha+2k+1)}\sin^{\alpha+2k+1}(\theta) \cos^{\beta+1}(\theta)+\\
      &+& \frac{(\alpha+\beta+2)\cdots(\alpha+\beta+2k+2)}{(\alpha +1)\cdots(\alpha+2k+1)} \int \sin^{\alpha+2k+2}(\theta)\cos^{\beta}(\theta) d\theta.
    \end{eqnarray*}
\end{cor}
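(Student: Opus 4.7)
The corollary is nothing more than a $(k+1)$-fold iteration of Lemma \ref{LEM General Recursion for Sin}, so I would proceed by induction on $k$, with Lemma \ref{LEM General Recursion for Sin} itself as the base case. A single application of the lemma yields the leading closed-form piece $\sin^{\alpha+1}(\theta)\cos^{\beta+1}(\theta)/(\alpha+1)$ together with the initial residual $\frac{\alpha+\beta+2}{\alpha+1}\int\sin^{\alpha+2}(\theta)\cos^{\beta}(\theta)\,d\theta$, precisely matching the displayed pattern with only the leading summand present.

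For the inductive step, assume the identity holds at stage $k$, so the right-hand side carries a residual
\[
\frac{(\alpha+\beta+2)\cdots(\alpha+\beta+2k+2)}{(\alpha+1)\cdots(\alpha+2k+1)}\int \sin^{\alpha+2k+2}(\theta)\cos^{\beta}(\theta)\,d\theta.
\]
I would apply Lemma \ref{LEM General Recursion for Sin} once more, this time with $\alpha$ replaced by $\alpha+2k+2$, to rewrite the integral as
\[
\frac{\sin^{\alpha+2k+3}(\theta)\cos^{\beta+1}(\theta)}{\alpha+2k+3} + \frac{\alpha+\beta+2k+4}{\alpha+2k+3}\int \sin^{\alpha+2k+4}(\theta)\cos^{\beta}(\theta)\,d\theta.
\]
Distributing the already-present prefactor, the first summand becomes the next closed-form term in the pattern, with sine exponent $\alpha+2(k+1)+1$ and denominator product extended by the single new factor $\alpha+2k+3$, while the second becomes the updated residual with numerator product extended by the factor $\alpha+\beta+2k+4$, exactly as required at stage $k+1$.

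There is no genuine obstacle; the step is pure bookkeeping, and the fact that the numerator product $(\alpha+\beta+2)(\alpha+\beta+4)\cdots$ and the denominator product $(\alpha+1)(\alpha+3)\cdots$ each gain precisely one further factor per iteration is encoded directly in the shape of Lemma \ref{LEM General Recursion for Sin}. The only silent side-condition is the non-vanishing of each denominator $\alpha+2j+1$ that appears, which is tacit in the statement and is automatic in every application of the corollary made elsewhere in the paper, since there $\alpha$ takes positive values coming from the dimension $n$ and exponent $p$.
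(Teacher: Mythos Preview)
Your proposal is correct and matches the paper's own approach: the paper does not spell out a proof at all, merely stating that the corollary is ``the $k$-th iteration of Lemma \ref{LEM General Recursion for Sin}'', which is exactly the induction you describe. Your bookkeeping is accurate, and your remark about the implicit non-vanishing of the denominators $\alpha+2j+1$ is a fair observation.
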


We will now prove the lower estimate.

\begin{prop}
  There exists a positive absolute constant $c'$, such that for every $n,N\in\N$, with $n \leq N \leq e^{c'n}$, and every $1\leq p < \infty$, we have that if
  $X_1,\ldots, X_N$ are independent random vectors uniformly distributed on $D_p^n$ then
    $$
      \mathbb E \max_{1\leq i \leq N} \abs{\skp{X_i,e_j}} \gtrsim \left(\log N\right)^\frac{1}{p}.
    $$
\end{prop}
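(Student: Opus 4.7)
The plan is to invoke Corollary \ref{CorollaryOrlicz}: since the map $s\mapsto M(1/s)$ is non-increasing, the infimum defining $\E h_{K_N}(e_j)$ is bounded below by any $s_0$ for which $M(1/s_0)>1/N$. I would take $s_0=c(\log N)^{1/p}$ for a sufficiently small absolute constant $c>0$, to be fixed at the end, and verify the required inequality via representation (\ref{secondexpressionforM}) from Lemma \ref{Mlpballs}. Writing $w=s_0|B_p^n|^{1/n}\sim c(\log N/n)^{1/p}$, the hypothesis $N\le e^{c'n}$ together with a small choice of $c$ keeps $w^p\le 1/2$, placing us in the regime where $(1-w^p)^{(n-1+2p)/p}$ has controlled exponential decay.

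The first step is to isolate the leading summand of (\ref{secondexpressionforM}),
\[
T_1=\frac{2|B_p^{n-1}|}{(n-1+p)(n-1+2p)|B_p^n|}\cdot\frac{(1-w^p)^{(n-1+2p)/p}}{w^{2p-1}},
\]
and show it already dominates $1/N$. Using $|B_p^{n-1}|/|B_p^n|\sim n^{1/p}$ and the inequality $\log(1-x)\ge -2x$ for $x\in[0,1/2]$ gives $(1-w^p)^{(n-1+2p)/p}\ge N^{-\alpha(c)}$ with $\alpha(c)=2c^p/p$. After the powers of $n$ collapse, this yields $T_1\gtrsim N^{-\alpha(c)}/(c^{2p-1}(\log N)^{(2p-1)/p})$, which exceeds $2/N$ once $c$ is chosen small enough (with $N$ large in terms of $p,c$; the remaining finite range is absorbed by the implicit constants).

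The main work is then to show that the remaining two summands of (\ref{secondexpressionforM}) cannot cancel $T_1$. For the second summand (nonpositive for $p>1$), I would apply the recursion of Lemma \ref{LEM General Recursion for Sin} to the integral of $(\sin\theta)^{2(n-1)/p+5}/(\cos\theta)^{5-2/p}$ and extract the leading boundary term $p(1-w^p)^{(n-1+3p)/p}w^{1-2p}/(2(n-1+3p))$; tracking the prefactors gives a ratio to $T_1$ of order $(p-1)(1-w^p)/(n-1+3p)\lesssim 1/n$ for fixed $p$. For the third summand, one first integrates the inner radial integral by parts (using an antiderivative of $r^{p-1}(1-|B_p^n|^{p/n}r^p)^{(n-1)/p+2}$), producing a leading boundary $\sim a_n^{2p-2}(\sin\theta)^{2(n-1)/p+6}(\cos\theta)^{4/p-6}/(n-1+3p)$, where $a_n=|B_p^n|^{1/n}$, and then applies Lemma \ref{LEM General Recursion for Sin} to the outer $\theta$-integral. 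The factor $|B_p^n|^{(2p-2)/n}$ in the denominator of the third summand cancels $a_n^{2p-2}$, and the resulting ratio to $T_1$ is of order $(2-p)(p-1)(1-w^p)^2 w^{-p}/n^2\lesssim 1/n^2$.

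Combining the three estimates, for $n$ sufficiently large the sum of the correction terms is at most $T_1/2$ in absolute value, so $M(1/s_0)\ge T_1/2>1/N$ and hence $\E h_{K_N}(e_j)\gtrsim(\log N)^{1/p}$ by Corollary \ref{CorollaryOrlicz}. The main obstacle is that, in contrast to the upper bound proof — where nonpositive summands could simply be dropped — the signed decomposition in Lemma \ref{Mlpballs} forces us to estimate the correction terms rather than discard them. The recursion of Lemma \ref{LEM General Recursion for Sin} is exactly the tool designed to extract their leading behavior, but the third summand is delicate: its double-integral structure requires an additional integration by parts in the inner radial variable before the trigonometric recursion can be applied.
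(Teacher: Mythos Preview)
Your plan has a genuine gap in the step where you bound the correction terms $T_2$ and $T_3$. One application of Lemma~\ref{LEM General Recursion for Sin} to the integral in $T_2$ writes
\[
\int_0^A \sin^{\alpha}\!\theta\,\cos^{\beta}\!\theta\,d\theta
=\frac{\sin^{\alpha+1}\!A\,\cos^{\beta+1}\!A}{\alpha+1}
+\frac{\alpha+\beta+2}{\alpha+1}\int_0^A \sin^{\alpha+2}\!\theta\,\cos^{\beta}\!\theta\,d\theta,
\]
with $\alpha=2(n-1)/p+5$ and $\beta=2/p-5$. Both summands on the right are nonnegative, so the boundary term you extract is only a \emph{lower} bound for the integral, not an upper bound; you cannot use it to control $|T_2|$ from above. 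Moreover the remainder carries the coefficient $(\alpha+\beta+2)/(\alpha+1)=(2n/p+2)/(2(n-1)/p+6)$, which is $\sim 1$, so after $k$ iterations the integral equals a sum of $k$ comparable boundary terms plus a remainder---roughly $n$ times your single term, not once. A direct integration by parts in the other direction gives the sharp upper bound
\[
\int_0^A \frac{\sin^{\alpha}\!\theta}{\cos^{5-2/p}\!\theta}\,d\theta
\le \frac{(1-w^p)^{(n-1)/p+2}}{(4-2/p)\,w^{2p-1}},
\]
which yields $|T_2|/T_1\le 3(p-1)/(2p-1)$. This ratio is $1$ at $p=2$ and tends to $3/2$ as $p\to\infty$, so $|T_2|$ is of the \emph{same order} as $T_1$, not $O(T_1/n)$. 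The same difficulty afflicts your treatment of $T_3$: the outer trigonometric integral again requires $\sim n$ recursion steps, and the claimed $O(1/n^2)$ ratio is unjustified. Thus the subtraction $M(1/s_0)\ge T_1-|T_2|-|T_3|$ may well be nonpositive, and the argument collapses.

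The paper circumvents this by working primarily with (\ref{firstexpressionforM}) and splitting into $p$-ranges. For $1<p\le 2$ the second summand of (\ref{firstexpressionforM}) is nonnegative and can be dropped, leaving a single trigonometric integral that is bounded \emph{below} via Corollary~\ref{COR General Recursion for Sin k steps} with $k=n$; the factor $k+1$ from summing $n$ boundary terms is essential to match $T_1$. For $p\ge 2$ the paper combines the two terms of (\ref{firstexpressionforM}) through several integrations by parts and again applies the $k$-step recursion, with additional care when $p$ is large relative to $n/\log N$. A secondary issue in your sketch---``$N$ large in terms of $p,c$'' and the exponent $\alpha(c)=2c^p/p$---also threatens the required uniformity in $p$; the paper handles large $p$ by a separate, direct estimate.
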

\begin{proof}
  We start with the case $1 < p \leq 2$ where we use the recursion formula. Since $1<p\leq 2$ we have, using the representation of $M$ in (\ref{firstexpressionforM}) that
    $$
      M\left(\frac{1}{s}\right) \geq \frac{4}{p(n-1+p)}\frac{|B_p^{n-1}|}{|B_p^n|}\int_0^{\cos^{-1}(s^{-1}|B_p^n|^{\frac{1}{n}})^\frac{p}{2}}
      (\sin\theta)^{2\frac{n-1}{p}+3} (\cos\theta)^{\frac{2}{p}-3}d\theta.
    $$
  Using Corollary \ref{COR General Recursion for Sin k steps} with $\alpha = \frac{2n}{p}-\frac{2}{p}+3$ and $\beta = \frac{2}{p}-3$, we have $-1 \leq \beta+1 <0$, and for any $k\in\N$ we get
    \begin{eqnarray*}
      M\left(\frac{1}{s}\right) & \geq & \frac{4}{p(n-1+p)}\frac{|B_p^{n-1}|}{|B_p^n|} \left[ \frac{(\cos\theta)^{\beta+1}}{\alpha+1}
      \left\{ (\sin\theta)^{\alpha +1} + \frac{\alpha+\beta+2}{\alpha+3}(\sin\theta)^{\alpha+3} + \ldots \right. \right. \\
      &+&  \left. \left. \frac{(\alpha+\beta+2)\cdots(\alpha + \beta +2k)}{(\alpha+3)\cdots(\alpha+2k+1)} (\sin\theta)^{\alpha+2k+1} \right\}
      \Big|_{0}^{\cos^{-1}((s|B_p^{n}|^{\frac{1}{n}})^{\frac{p}{2}})} \right].
    \end{eqnarray*}
  Since $\beta+1=\frac{2}{p}(1-p)$, we get
    \begin{eqnarray*}
      M\left(\frac{1}{s}\right) & \geq & \tfrac{4}{p(n-1+p)}\tfrac{|B_p^{n-1}|}{|B_p^n|(\alpha +1)(s|B_p^n|^{\frac{1}{n}})^{p-1}}
      \left[ (1-s^p|B_p^n|^{\frac{p}{n}})^{\frac{\alpha+1}{2}} + \right. \\
      &+& \left. \tfrac{\alpha+\beta+2}{\alpha+3}(1-s^p|B_p^n|^{\frac{p}{n}})^{\frac{\alpha+3}{2}} + \ldots + \tfrac{(\alpha+\beta +2)\cdots(\alpha+\beta+2k)}{(\alpha+3)\cdots(\alpha+2k+1)}(1-s^p|B_p^n|^{\frac{p}{n}})^{\frac{\alpha+2k+1}{2}}\right] \\
      & \geq & \tfrac{4}{p(n-1+p)}\tfrac{|B_p^{n-1}|(1-s^p|B_p^n|^{\frac{p}{n}})^{\frac{\alpha+2k+1}{2}}}{|B_p^n|(\alpha +1)(s|B_p^n|^{\frac{1}{n}})^{p-1}}
      \left[ 1+(1-\tfrac{1-\beta}{\alpha+3}) + \right. \\
      &+&\left. (1-\tfrac{1-\beta}{\alpha+3})(1-\tfrac{1-\beta}{\alpha+5}) + \ldots +
      (1-\tfrac{1-\beta}{\alpha+3})(1-\tfrac{1-\beta}{\alpha+5})\cdots(1-\tfrac{1-\beta}{\alpha+2k+1}) \right]. \\
      & \geq & \frac{4(k+1)}{p(n-1+p)}\frac{|B_p^{n-1}|(1-s^p|B_p^n|^{\frac{p}{n}})^{\frac{\alpha+2k+1}{2}}}{|B_p^n|(\alpha +1)(s|B_p^n|^{\frac{1}{n}})^{p-1}}
      \left(1-\frac{1-\beta}{\alpha+2k+1}\right)^k.
    \end{eqnarray*}
  So this yields
    $$
      M\left(\frac{1}{s}\right) \geq \frac{2(k+1)}{p(n-1+p)}\frac{|B_p^{n-1}|(1-s^p|B_p^n|^{\frac{p}{n}})^{\frac{n-1}{p}+k+2}}{|B_p^n|
      (\frac{n-1}{p}+2)(s|B_p^n|^{\frac{1}{n}})^{p-1}} \left(1-\frac{2-\frac{1}{p}}{\frac{n-1}{p}+k+1}\right)^k.
    $$
  If we choose $k=n$ and take into account that $1<p\leq 2$, we get
    $$
      M\left(\frac{1}{s}\right) \geq C \frac{|B_p^{n-1}|}{|B_p^{n}|} \frac{e^{\tfrac{n-1+np}{p}\log(1-s^p|B_p^n|^{\frac{p}{n}})}}{(n-1+2p)(s|B_p^n|^{\frac{1}{n}})^{p-1}}.
    $$
  We take $s_0=\frac{\gamma^{\frac{1}{p}}(\log N)^{\frac{1}{p}}}{|B_p^n|^{\frac{1}{n}}n^{\frac{1}{p}}}$with $\gamma$ a constant to be chosen later. Then, since $N\leq e^n$, we obtain
    $$
      M\left(\frac{1}{s_0}\right) \geq C \frac{|B_p^{n-1}|}{|B_p^{n}|} \frac{e^{-c_1\gamma\log N}}{(n-1+2p)(\gamma\frac{\log N}{n})^{1-\frac{1}{p}}}
      \geq \frac{C'}{N^{c_1\gamma}(\gamma \log N)^{1-\frac{1}{p}}}.
    $$
  Choosing $\gamma$ small enough, so that $c_1\gamma < 1$, we get
    $$
      M\left(\frac{1}{s_0}\right) \geq \frac{1}{N}
    $$
  if $N \geq N_0$ for some $N_0\in\N$ large enough. Therefore, there exists an absolute positive constant $c$ such that
    $$
      \mathbb E \max_{1\leq i \leq N} \abs{\skp{X_i,e_j}} \geq c (\log N)^{\frac{1}{p}}.
    $$
  Now, let us consider the easier case where $p=1$. In this case, we have
    $$
      M\left(\frac{1}{s}\right) = \tfrac{2}{n(n+1)}\tfrac{|B_1^{n-1}|}{|B_1^n|}\frac{(1-s|B_1^n|^{\frac{1}{n}})^{n+1}}{s|B_1^n|^{\frac{1}{n}}}.
    $$
  If we now choose $s_0=\alpha \log N$, where $\alpha$ is a constant to be chosen later we obtain
  $$
      M\left(\frac{1}{s_0}\right) \geq \frac{C}{N^{c\alpha}\log N}
    $$
  and so, choosing $\alpha$ a constant small enough so that $c\alpha <1$ we obtain that
 $$
  M\left(\frac{1}{s_0}\right) \geq \frac{1}{N}
  $$
  whenever $N\geq N_0$. Therefore, if $p=1$ there exists an absolute positive constant $c$ such that
    $$
      \mathbb E \max_{1\leq i \leq N} \abs{\skp{X_i,e_j}} \geq c (\log N).
    $$
Now, let's treat the case $2\leq p$. We will assume that $p-1\leq c\frac{n}{\alpha \log N}$, where $\alpha$ is a constant that will be determined later and $c$ is an absolute constant small enough. We will also assume that $p\leq N^\frac{1}{4}$. We have seen that the second term in (\ref{firstexpressionforM}) equals
\begin{eqnarray*}
&&
\frac{4(2-p)}{p\left(n-1+2p\right)(n-1+p)}\frac{|B_p^{n-1}|}{|B_p^n|}\int_0^{\cos^{-1}(s|B_p^n|^{\frac{1}{n}})^\frac{p}{2}}\frac{(\sin\theta)^{2\frac{n-1}{p}+5}}{(\cos\theta)^{5-\frac{2}{p}}}d\theta\cr
&-&\frac{8(2-p)(p-1)}{p\left(n-1+2p\right)(n-1+p)}\frac{|B_p^{n-1}|}{|B_p^n||B_p^n|^\frac{2p-2}{n}}\times\cr
&\times&\int_0^{\cos^{-1}(s|B_p^n|^{\frac{1}{n}})^\frac{p}{2}}\frac{\sin\theta}{(\cos\theta)^{1+\frac{2}{p}}}\int_{(\cos\theta)^\frac{2}{p}|B_p^n|^{-\frac{1}{n}}}^{|B_p^n|^{-\frac{1}{n}}}r^{1-2p}(1-|B_p^n|^\frac{p}{n}r^p)^{\frac{n-1}{p}+2}drd\theta
\end{eqnarray*}
and so if $p\geq 2$ the second term in the expression (\ref{firstexpressionforM}) defining $M\left(\frac{1}{s}\right)$ is greater than or equal to
$$
\frac{4(2-p)}{p\left(n-1+2p\right)(n-1+p)}\frac{|B_p^{n-1}|}{|B_p^n|}\int_0^{\cos^{-1}(s|B_p^n|^{\frac{1}{n}})^\frac{p}{2}}\frac{(\sin\theta)^{2\frac{n-1}{p}+5}}{(\cos\theta)^{5-\frac{2}{p}}}d\theta.
$$
Integration by parts yields that this quantity equals
\begin{eqnarray*}
&&
\frac{4(2-p)}{p(n-1+p)(n-1+2p)}\frac{|B_p^{n-1}|}{|B_p^n|}\times\cr
&\times&\left(\frac{(1-s^p|B_p^n|^\frac{p}{n})^{\frac{n-1}{p}+2}}{\left(4-\frac{2}{p}\right)\left(s|B_p^n|^\frac{1}{n}\right)^{2p-1}}
-\frac{2\frac{n-1}{p}+4}{4-\frac{2}{p}}\int_0^{\cos^{-1}(s|B_p^n|^{\frac{1}{n}})^\frac{p}{2}}\frac{(\sin\theta)^{2\frac{n-1}{p}+3}}{(\cos\theta)^{3-\frac{2}{p}}}d\theta\right).
\end{eqnarray*}
Thus, putting this together with the first term we have that if $p\geq 2$
\begin{eqnarray*}\label{IEQ M first term}
M\left(\frac{1}{s}\right)&\geq&\frac{12(p-1)}{p(2p-1)(n-1+p)}\frac{|B_p^{n-1}|}{|B_p^n|}\int_0^{\cos^{-1}(s|B_p^n|^{\frac{1}{n}})^\frac{p}{2}}\frac{(\sin\theta)^{2\frac{n-1}{p}+3}}{(\cos\theta)^{3-\frac{2}{p}}}d\theta\cr
&-&\frac{2(p-2)}{(n-1+p)(n-1+2p)(2p-1)}\frac{|B_p^{n-1}|}{|B_p^n|}\frac{(1-s^p|B_p^n|^\frac{p}{n})^{\frac{n-1}{p}+2}}{\left(s|B_p^n|^\frac{1}{n}\right)^{2p-1}}.
\end{eqnarray*}
Using integration by parts, the first term in the previous expression equals
\begin{eqnarray*}
&&
\frac{6}{(2p-1)(n-1+p)}\frac{|B_p^{n-1}|}{|B_p^n|}\frac{(1-s^p|B_p^n|^\frac{p}{n})^{\frac{n-1}{p}+1}}{(s|B_p^n|^\frac{1}{n})^{p-1}}\cr
&-&\frac{12}{p(2p-1)}\frac{|B_p^{n-1}|}{|B_p^n|}\int_0^{\cos^{-1}(s|B_p^n|^{\frac{1}{n}})^\frac{p}{2}}\frac{(\sin\theta)^{2\frac{n-1}{p}+1}}{(\cos\theta)^{1-\frac{2}{p}}}d\theta.
\end{eqnarray*}
Using the recursion formula in Corollary \ref{COR General Recursion for Sin k steps} we obtain that for any $k\in\N$ this quantity equals
\begin{eqnarray*}
&&
\tfrac{6}{(2p-1)(n-1+p)}\frac{|B_p^{n-1}|}{|B_p^n|}\frac{(1-s^p|B_p^n|^\frac{p}{n})^{\frac{n-1}{p}+1}}{(s|B_p^n|^\frac{1}{n})^{p-1}}\cr
&-&\tfrac{6}{(2p-1)(n-1+p)}\frac{|B_p^{n-1}|}{|B_p^n|}(s|B_p^n|^\frac{1}{n})(1-s^p|B_p^n|^\frac{p}{n})^{\frac{n-1}{p}+1}\cr
&-&\tfrac{6\left(2\frac{n}{p}+2\right)}{(2p-1)(n-1+p)\left(2\frac{n-1}{p}+4\right)}\frac{|B_p^{n-1}|}{|B_p^n|}(s|B_p^n|^\frac{1}{n})(1-s^p|B_p^n|^\frac{p}{n})^{\frac{n-1}{p}+2}\cr
&-&\tfrac{6\left(2\frac{n}{p}+2\right)\left(2\frac{n}{p}+4\right)}{(2p-1)(n-1+p)\left(2\frac{n-1}{p}+4\right)\left(2\frac{n-1}{p}+6\right)}\frac{|B_p^{n-1}|}{|B_p^n|}(s|B_p^n|^\frac{1}{n})(1-s^p|B_p^n|^\frac{p}{n})^{\frac{n-1}{p}+3}\cr
&-&\dots\cr
&-&\tfrac{6\left(2\frac{n}{p}+2\right)\left(2\frac{n}{p}+4\right)\dots\left(2\frac{n}{p}+2k-2\right)}{(2p-1)(n-1+p)\left(2\frac{n-1}{p}+4\right)\left(2\frac{n-1}{p}+6\right)\dots\left(2\frac{n-1}{p}+2k\right)}\frac{|B_p^{n-1}|}{|B_p^n|}(s|B_p^n|^\frac{1}{n})(1-s^p|B_p^n|^\frac{p}{n})^{\frac{n-1}{p}+k}\cr
&-&\tfrac{6\left(2\frac{n}{p}+2\right)\left(2\frac{n}{p}+4\right)\dots\left(2\frac{n}{p}+2k-2\right)\left(2\frac{n-1}{p}+2k\right)}{(2p-1)(n-1+p)\left(2\frac{n-1}{p}+4\right)\left(2\frac{n-1}{p}+6\right)\dots\left(2\frac{n-1}{p}+2k\right)}\frac{|B_p^{n-1}|}{|B_p^n|}\times\cr
&\times&\int_0^{\cos^{-1}(s|B_p^n|^{\frac{1}{n}})^\frac{p}{2}}\frac{(\sin\theta)^{2\frac{n-1}{p}+2k+1}(\cos\theta)}{(\cos\theta)^{2-\frac{2}{p}}}d\theta.
\end{eqnarray*}
Estimating the cosine in the denominator inside the integral by the value at its extreme point, we obtain that this quantity is greater than
\begin{eqnarray*}
&&
\tfrac{6}{(2p-1)(n-1+p)}\frac{|B_p^{n-1}|}{|B_p^n|}\frac{1}{(s|B_p^n|^{\frac{1}{n}})^{p-1}}\Big((1-s^p|B_p^n|^\frac{p}{n})^{\frac{n-1}{p}+1}-s^p|B_p^n|^\frac{p}{n}(1-s^p|B_p^n|^\frac{p}{n})^{\frac{n-1}{p}+1}\cr
&-&\tfrac{\left(2\frac{n}{p}+2\right)}{\left(2\frac{n-1}{p}+4\right)}s^p|B_p^n|^\frac{p}{n}(1-s^p|B_p^n|^\frac{p}{n})^{\frac{n-1}{p}+2}-\tfrac{\left(2\frac{n}{p}+2\right)\left(2\frac{n}{p}+4\right)}{\left(2\frac{n-1}{p}+4\right)\left(2\frac{n-1}{p}+6\right)}s^p|B_p^n|^\frac{p}{n}(1-s^p|B_p^n|^\frac{p}{n})^{\frac{n-1}{p}+3}\cr
&-&\dots-\tfrac{\left(2\frac{n}{p}+2\right)\left(2\frac{n}{p}+4\right)\dots\left(2\frac{n}{p}+2k-2\right)}{\left(2\frac{n-1}{p}+4\right)\left(2\frac{n-1}{p}+6\right)\dots\left(2\frac{n-1}{p}+2k\right)}s^p|B_p^{n}|^\frac{p}{n}(1-s^p|B_p^n|^\frac{p}{n})^{\frac{n-1}{p}+k}\cr
&-&\tfrac{\left(2\frac{n}{p}+2\right)\left(2\frac{n}{p}+4\right)\dots\left(2\frac{n}{p}+2k-2\right)\left(2\frac{n}{p}+2k\right)}{\left(2\frac{n-1}{p}+4\right)\left(2\frac{n-1}{p}+6\right)\dots\left(2\frac{n-1}{p}+2k+2\right)}(1-s^p|B_p^n|^\frac{p}{n})^{\frac{n-1}{p}+k+1}\Big).\cr
\end{eqnarray*}
Since for every $m$ we have that $\frac{2\frac{n}{p}+2m}{2\frac{n-1}{p}+2m+2}=1-\frac{2-\frac{2}{p}}{2\frac{n-1}{p}+2m+2}\leq 1$, this expression is greater than
\begin{eqnarray*}
&&
\tfrac{6|B_p^{n-1}|(1-s^p|B_p^n|^\frac{p}{n})^{\frac{n-1}{p}+k+1}}{(2p-1)(n-1+p)|B_p^n|(s|B_p^n|^{\frac{1}{n}})^{p-1}}\left(1-\frac{\left(2\frac{n}{p}+2\right)\left(2\frac{n}{p}+4\right)\dots\left(2\frac{n}{p}+2k-2\right)\left(2\frac{n}{p}+2k\right)}{\left(2\frac{n-1}{p}+4\right)\left(2\frac{n-1}{p}+6\right)\dots\left(2\frac{n-1}{p}+2k+2\right)}\right)\cr
&&\geq\tfrac{6}{(2p-1)(n-1+p)}\frac{|B_p^{n-1}|}{|B_p^n|}\frac{(1-s^p|B_p^n|^\frac{p}{n})^{\frac{n-1}{p}+k+1}}{(s|B_p^n|^{\frac{1}{n}})^{p-1}}\left(1-\left(1-\frac{p-1}{n-1+(k+1)p}\right)^k\right)\cr
&&=\tfrac{6}{(2p-1)(n-1+p)}\frac{|B_p^{n-1}|}{|B_p^n|}\frac{(1-s^p|B_p^n|^\frac{p}{n})^{\frac{n-1}{p}+k+1}}{(s|B_p^n|^{\frac{1}{n}})^{p-1}}\left(1-e^{k\log\left(1-\frac{p-1}{n-1+(k+1)p}\right)}\right).
\end{eqnarray*}
Hence,
  \begin{eqnarray*}
    M\left(\frac{1}{s}\right) & \geq & \frac{6}{(2p-1)(n-1+p)}\frac{|B_p^{n-1}|}{|B_p^n|}\frac{(1-s^p|B_p^n|^\frac{p}{n})^{\frac{n-1}{p}+k+1}}{(s|B_p^n|^{\frac{1}{n}})^{p-1}}\left(1-e^{k\log\left(1-\frac{p-1}{n-1+(k+1)p}\right)}\right)\\
    &-& \frac{2(p-2)}{(n-1+p)(n-1+2p)(2p-1)}\frac{|B_p^{n-1}|}{|B_p^n|}\frac{(1-s^p|B_p^n|^\frac{p}{n})^{\frac{n-1}{p}+2}}{\left(s|B_p^n|^\frac{1}{n}\right)^{2p-1}}\\
    &=&\frac{1}{(2p-1)(n-1+p)}\frac{|B_p^{n-1}|}{|B_p^n|}\frac{(1-s^p|B_p^n|^\frac{p}{n})^{\frac{n-1}{p}+2}}{(s|B_p^n|^\frac{1}{n})^{p-1}}\times\\
    &\times&\left(6(1-s^p|B_p^n|^\frac{p}{n})^{k-1}\left(1-e^{k\log\left(1-\frac{p-1}{n-1+(k+1)p}\right)}\right)-\frac{2(p-2)}{(n-1+2p)(s^p|B_p^n|^\frac{p}{n})}\right)
  \end{eqnarray*}
We take
  $$
      s_0 = \frac{ \alpha^{\frac{1}{p}} (p-1)^\frac{1}{p}}{ |B_p^n|^{\frac{1}{n}} n^{\frac{1}{p}} } \left( \log N \right)^{\frac{1}{p}},
  $$
Then,
\begin{eqnarray*}
\frac{2(p-2)}{(n-1+2p)(s_0^p|B_p^n|^\frac{p}{n})}\leq\frac{2}{\left(1-\frac{1}{n}+\frac{2p}{n}\right)\alpha\log N}\leq\frac{2.1}{\alpha\log N}
\end{eqnarray*}
if $n\geq n_0$.
On the other hand, choosing $k$ so that $k+1=\frac{2n}{\alpha(p-1)\log N}$ we have
\begin{eqnarray*}
&&6(1-s_0^p|B_p^n|^\frac{p}{n})^{k-1}\left(1-e^{k\log\left(1-\frac{p-1}{n-1+(k+1)p}\right)}\right)\\
&&\geq 6\left(1-\frac{\alpha (p-1)\log N}{n}\right)^\frac{2 n}{\alpha(p-1)\log N}\left(1-e^{\left(\frac{2 n}{(p-1)\alpha \log N}-1\right)\log\left(1-\frac{p-1}{n-1+\frac{p}{p-1}\frac{2 n}{\alpha \log N}}\right)}\right)\\
&&\geq 6e^{-1}\left(1-e^{\left(\frac{2 n}{(p-1)\alpha \log N}-1\right)\log\left(1-\frac{p-1}{n-1+\frac{p}{p-1}\frac{2 n}{\alpha \log N}}\right)}\right),\\
\end{eqnarray*}
where  the last inequality holds because our assumptions on $p$.
This last quantity is greater than
\begin{eqnarray*}
&&6e^{-1}\left(1-e^{-\left(\frac{2 n}{(p-1)\alpha \log N}-1\right) \frac{p-1}{n-1+\frac{p}{p-1}\frac{2n}{\alpha \log N}}}\right)=6e^{-1}\left(1-e^{-\frac{2}{\alpha \log N}\frac{1-\frac{(p-1)\alpha\log N}{n}}{1-\frac{1}{n}+\frac{p}{p-1}\frac{1}{2\alpha \log N}}}\right)\\
&&\geq 6e^{-1}\left(1-e^{-\frac{2(1-c)}{\alpha\log N}}\right)\geq \frac{6e^{-1}(1-c)}{\alpha \log N}
\end{eqnarray*}
if $N\geq N_0$. Taking $c$ small enough so that $6e^{-1}(1-c)>2.1$, we have that
\begin{eqnarray*}
M\left(\frac{1}{s_0}\right)&\geq&\frac{C}{p^2 N^{c_1\alpha}(\alpha \log N)^{2-\frac{2}{p}}}\geq\frac{C}{N^{c_1\alpha+\frac{1}{2}}(\alpha\log N)^{2-\frac{2}{p}}},
\end{eqnarray*}
since we are assuming $p\leq N^\frac{1}{4}$.
Taking $\alpha$ such that $c_1\alpha+\frac{1}{2}<1$ we obtain
$$
M\left(\frac{1}{s_0}\right)  \geq \frac{1}{N}
$$
if $ N\geq N_1$ and $n\geq n_0$ for some $n_0, N_1$ big enough. Therefore
  $$
      \mathbb E \max_{1 \leq i \leq N} \abs{ \skp{X_i,e_1} } \geq \tilde C \left( \log\tfrac{N}{p^{\frac{1}{4}}} \right)^{\frac{1}{p}} \geq C \left( \log N \right)^{\frac{1}{p}},
  $$
where $N\geq N_0$ and $C$ is a positive absolute constant.\\
Now we consider the case $p\geq c\frac{n}{\log N}$ or $p \geq N^{1/4}$. In that case we choose
  $$
      s_0 = \frac{1}{2|B_p^n|^{\frac{1}{n}}}.
  $$
Then 
  \begin{eqnarray*}
      M\left( \frac{1}{s_0} \right) & = & 2  \frac{|B_p^{n-1}| |B_p^n|^{\frac{1}{n}}}{|B_p^n|}  \int_{|B_p^n|^{\frac{1}{n}}}^{2|B_p^n|^{\frac{1}{n}}}  \int_{\frac{1}{t}}^{|B_p^n|^{-\frac{1}{n}}} r \left(1-|B_p^n|^{\frac{p}{n}} r^p \right)^{\frac{n-1}{p}} dr dt \\
      & \geq & 2  \frac{|B_p^{n-1}| |B_p^n|^{\frac{1}{n}}}{|B_p^n|}  \int_{\frac{7}{4}|B_p^n|^{\frac{1}{n}}}^{2|B_p^n|^{\frac{1}{n}}}  \int_{\frac{1}{t}}^{\frac{2}{3}|B_p^n|^{-\frac{1}{n}}} r \left(1-|B_p^n|^{\frac{p}{n}} r^p \right)^{\frac{n-1}{p}} dr dt \\
      & \geq & 2  \frac{|B_p^{n-1}| |B_p^n|^{\frac{1}{n}}}{|B_p^n|}  \int_{\frac{7}{4}|B_p^n|^{\frac{1}{n}}}^{2|B_p^n|^{\frac{1}{n}}} 
      \left( \frac{2}{3|B_p^n|^{\frac{1}{n}}}-\frac{1}{t}\right) \frac{1}{t}\left(1-\frac{1}{\left(\frac{3}{2}\right)^p} \right)^{\frac{n-1}{p}} dt \\
      & \geq & \frac{1}{42} \frac{|B_p^{n-1}|}{|B_p^n|} \left(1-\frac{1}{\left(\frac{3}{2}\right)^p} \right)^{\frac{n-1}{p}} 
      ~ \geq ~ C_1 n^{\frac{1}{p}} e^{\frac{n-1}{p} \log\left(1-\frac{1}{\left( \frac{3}{2} \right)^p}\right)} \\
      & \geq & C_1 n^{\frac{1}{p}} e^{-c_2\frac{n-1}{p(3/2)^p}}. 
  \end{eqnarray*}  
We want the latter expression to be greater or equal to $N^{-1}$, {\it i.e.},
  $$
     C_1 n^{\frac{1}{p}} e^{-c_2\frac{n-1}{p\left(3/2\right)^p}} \geq \frac{1}{N},
  $$
which is equivalent to
  $$
      \log N + \log(C_1) + \frac{1}{p} \log(n) \geq c_2\frac{n-1}{p\left(\frac{3}{2}\right)^p}.
  $$ 
To obtain this, it is enough to show
  $$
      \log N  \geq c_2\frac{n-1}{p\left(\frac{3}{2}\right)^p},
  $$
and since $p\geq c\frac{n}{\log N}$ and $N\leq e^{c'n}$, to obtain the latter inequality, it is enough to have
  $$
      \log N  \geq c_2\frac{n-1}{p\left(\frac{3}{2}\right)^{\frac{c}{c'}}}.
  $$     
But 
  $$
      c_2\frac{n-1}{p\left(\frac{3}{2}\right)^{\frac{c}{c'}}} \leq c_2\frac{n-1}{c\frac{n}{\log N }\left(\frac{3}{2}\right)^{\frac{c}{c'}}} \leq c_2\frac{\log N}{c\left(\frac{3}{2}\right)^{\frac{c}{c'}}} \leq \log N,
  $$
   if $c'$ is small enough. So we obtain the estimate. 
   If $p\geq N^{\frac{1}{4}}$ we immediately obtain
   $$
       C_1 n^{\frac{1}{p}} e^{-c_2\frac{n-1}{p\left(3/2\right)^p}} \geq C > \frac{1}{N},
   $$
 for $N\geq N_0$.  Therefore, in these two cases, we obtain the estimate
     $$
         \E h_{K_N}(e_j) \sim \frac{1}{|B_p^n|^{\frac{1}{n}}} \sim n^{\frac{1}{p}} \gtrsim (\log N)^{\frac{1}{p}}.
     $$
\end{proof}

\begin{remark}
In the case $p = \infty$ it is very easy to check that 
  $$
      \inf \left\{ s>0 : M\left(\frac{1}{s}\right) \leq \frac{1}{N} \right\} = \frac{ 1+\frac{1}{N} - \sqrt{\frac{2}{N}+\frac{1}{N^2}} }{2} \sim 1,
  $$
and so $\E h_{K_N}(e_j) \sim 1$.  
\end{remark}

\section{General Results}\label{GeneralResult}

Using our approach, we will now prove more general bounds for
symmetric isotropic convex bodies.  In the first theorem we assume
some mild technical conditions which are verified by the $\ell_p^n$
balls ($p\geq 2$). In this way we recover the upper estimates proved in the previous section.\\
Since $\E h_{K_N}(\theta) \sim \inf \left\{ s>0 : M_{\theta}\left( \frac{1}{s} \right) \leq \frac{1}{N} \right\}$, it seems natural to study for which value of $s$
  $$
      \int_{S^{n-1}} M_{\theta}(\tfrac{1}{s}) d\mu(\theta) = \frac{1}{N}.
  $$
As one could expect, this value
of $s$ is of the order $L_K \sqrt{\log N}$. As a consequence of
Chebychev's inequality we will obtain probability estimates for the set
of directions verifying $\E h_{K_N}(\theta) \leq C L_K \sqrt{\log N}$
or $\E h_{K_N}(\theta) \geq C L_K \sqrt{\log N}$ .   

\begin{thrm}
  Let $K$ be a symmetric and isotropic convex body, $n\leq N$, $\theta\in S^{n-1}$ and $X_1,\ldots,X_N$ be independent random vectors uniformly distributed in $K$. Define $h(t)= | K \cap \{ \skp{x,\theta}=t\}| ^{\frac{1}{n-1}}$.
  Assume that $h$ is twice differentiable and that $h'(t) \neq 0$ for all
  $t\in (0,h_K(\theta))$. Assume also that $-h'(t)/t$ is increasing, and that $h(h_K(\theta))=0$. Then,
    $$
        \mathbb E \max_{1\leq i \leq N}|\skp{X_i,\theta}| \leq C h^{-1}\left(h(0)(1-\alpha\tfrac{\log N}{n})\right),
    $$
  where $\alpha,C$, $\alpha>C$ are positive absolute constants.
\end{thrm}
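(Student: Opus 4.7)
By Corollary~\ref{CorollaryOrlicz}, $\mathbb E h_{K_N}(\theta)$ is comparable to $\inf\{s>0 : M_\theta(1/s)\leq 1/N\}$, so it suffices to exhibit an absolute constant $\alpha$ such that $s_0 := h^{-1}\bigl(h(0)(1-\alpha\log N/n)\bigr)$ satisfies $M_\theta(1/s_0)\leq 1/N$. Because $|K|=1$ and $K$ is symmetric, the density of $\langle X_1,\theta\rangle$ is $u\mapsto h(u)^{n-1}$ on $[-h_K(\theta),h_K(\theta)]$, and the same order-of-integration manipulation used in Lemma~\ref{Mlpballs} gives
\[
M_\theta(1/s_0) \;=\; \tfrac{2}{s_0}\int_{s_0}^{h_K(\theta)}(u-s_0)\,h(u)^{n-1}\,du \;\leq\; \tfrac{2}{s_0}\int_{s_0}^{h_K(\theta)} u\,h(u)^{n-1}\,du.
\]

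The core of the argument is one integration by parts tailored to the hypothesis. Writing $u\,h(u)^{n-1} = \tfrac{1}{n}\,\tfrac{u}{h'(u)}\,\tfrac{d}{du}[h(u)^n]$ and using $h(h_K(\theta))=0$ to kill the upper boundary term, one obtains
\[
\int_{s_0}^{h_K(\theta)} u\,h(u)^{n-1}\,du \;=\; \frac{s_0\,h(s_0)^n}{n\,|h'(s_0)|} \;-\; \frac{1}{n}\int_{s_0}^{h_K(\theta)} h(u)^n \left(\frac{u}{h'(u)}\right)'du.
\]
The hypothesis that $-h'(u)/u$ is increasing is precisely equivalent to $(u/h'(u))'\geq 0$ on $(0,h_K(\theta))$, so the residual integral is non-negative and can be dropped, yielding the clean estimate
\[
M_\theta(1/s_0) \;\leq\; \frac{2\,h(s_0)^n}{n\,|h'(s_0)|}.
\]

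It remains to bound the two factors. By the choice of $s_0$, $h(s_0)^n \leq h(0)^n(1-\alpha\log N/n)^n \leq h(0)^n N^{-\alpha}$. For the derivative, Brunn's theorem gives concavity of $h$, and the symmetry of $K$ forces $h'(0)=0$, so $|h'|$ is non-decreasing on $[0,h_K(\theta)]$. Therefore $h(0)-h(s_0) = \int_0^{s_0}|h'(v)|\,dv \leq s_0|h'(s_0)|$, which gives $|h'(s_0)| \geq h(0)\alpha\log N/(n s_0)$. Combining,
\[
M_\theta(1/s_0) \;\leq\; \frac{2\,h(0)^{n-1}\,s_0}{\alpha\log N}\,N^{-\alpha},
\]
and the required bound $M_\theta(1/s_0) \leq 1/N$ follows once $h(0)^{n-1} s_0 / (\alpha\log N) \lesssim N^{\alpha-1}$. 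The isotropicity of $K$ controls both factors: Hensley's inequality gives $h(0)^{n-1} = |K\cap\theta^\perp| \lesssim 1/L_K$, while the standard bound $h_K(\theta)\lesssim n L_K$ dominates $s_0$; the product is therefore $\lesssim n \leq N$, and the desired inequality closes for any $\alpha$ above a suitable absolute threshold $C$. I expect the real difficulty to lie entirely in spotting the integration by parts of the second paragraph; the hypothesis on $-h'/u$ is engineered precisely to discard the sign-indeterminate error term, after which the remaining estimates reduce to routine bookkeeping with isotropic inequalities.
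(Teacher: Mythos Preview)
Your proof is correct and follows essentially the same strategy as the paper: integrate by parts with $u/h'(u)$ as the auxiliary factor, use the hypothesis that $-h'(u)/u$ is increasing to discard the sign-indeterminate residual integral, and then close with Hensley's inequality and the isotropic bound $h_K(\theta)\lesssim nL_K$. Your execution is in fact a bit more economical than the paper's: the paper works directly with the double integral form of $M_\theta$, performs \emph{two} integrations by parts (each time invoking the hypothesis to drop a term) to reach $M(1/s)\leq \tfrac{2s\,h(s)^{n+1}}{n(n+1)s^2h'(s)^2}$, and then uses the hypothesis once more to obtain $-sh'(s)\geq 2(h(0)-h(s))$; you instead collapse the double integral by Fubini, do a single integration by parts to get $M_\theta(1/s_0)\leq \tfrac{2h(s_0)^n}{n|h'(s_0)|}$, and derive the needed lower bound on $|h'(s_0)|$ from concavity together with $h'(0)=0$ alone, without a further appeal to the hypothesis.
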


\begin{proof}
  First of all notice that $h$ is a concave function. Then, using Theorem \ref{THM Schuett Werner Litvak Gordon}, we get
    $$
        M\left(\frac{1}{s}\right) = \int_{\frac{1}{h_K(\theta)}}^{\frac{1}{s}} 2 \int_{\frac{1}{t}}^{h_K(\theta)} r h(r)^{n-1} dr dt =  2 \int_{\frac{1}{h_K(\theta)}}^{\frac{1}{s}}  \int_{\frac{1}{t}}^{h_K(\theta)} \frac{r}{h'(r)} h'(r)h(r)^{n-1} drdt.
    $$
  Integration by parts yields
    $$
        M\left(\frac{1}{s}\right) = 2 \int_{\frac{1}{h_K(\theta)}}^{\frac{1}{s}} -\frac{\tfrac{1}{t}h(\tfrac{1}{t})^n}{nh'(\tfrac{1}{t})} dt - \int_{\frac{1}{h_K(\theta)}}^{\frac{1}{s}}
        \int_{\frac{1}{t}}^{h_K(\theta)} h(r)^n \frac{h'(r)-rh''(r)}{nh'(r)^2} dr dt.
    $$
  Since $h'(t)-th''(t) \geq 0$, we have
    $$
        M\left(\frac{1}{s}\right) \leq 2 \int_{\frac{1}{h_K(\theta)}}^{\frac{1}{s}} -\frac{\tfrac{1}{t}h(\tfrac{1}{t})^n}{nh'(\tfrac{1}{t})} dt = -\frac{2}{n} \int_{s}^{h_K(\theta)} \frac{h(u)^n}{uh'(u)} du
        = -\frac{2}{n} \int_{s}^{h_K(\theta)} \frac{h'(u)h(u)^n}{uh'(u)^2} du.
    $$
  Again we use integration by parts and get
    \begin{eqnarray*}
        M\left(\frac{1}{s}\right) & \leq & \frac{2h(s)^{n+1}}{n(n+1)sh'(s)^2} - \frac{2}{n(n+1)} \int_{s}^{h_K(\theta)} \frac{h(u)^{n+1}(h'(u)+2uh''(u))}{u^2h'(u)^3} du \\
        & \leq	 & \frac{2sh(s)^{n+1}}{n(n+1)s^2h'(s)^2}.
    \end{eqnarray*}
  Furthermore, since we have $h'(t)-th''(t) \geq 0$, we get
    $$
        -sh'(s) = |sh'(s)| = \int_0^s - h'(t)-th''(t) dt \geq -2\int_0^sh'(t)dt = 2(h(0)-h(s)).
    $$
  Thus
    $$
        M\left(\frac{1}{s}\right) \leq \frac{sh(s)^{n-1}}{2n(n+1)(\frac{h(0)}{h(s)}-1)^2} = \frac{se^{(n-1) \log\frac{h(s)}{h(0)} } |K\cap\theta^{\bot}|h(s)^2 }{2n(n+1)(1-\frac{h(s)}{h(0)})^2h(0)^2}.
    $$
  Choosing
    $$
        s_0 = h^{-1}\left( h(0)(1-\alpha\tfrac{\log N}{n}) \right),
    $$
  there exists a positive constant $c_1$ such that
    $$
        M\left(\frac{1}{s_0}\right) \leq C \frac{s_0|K\cap\theta^{\bot}|}{N^{c_1\alpha}\alpha^2(\log N)^2}.
    $$
  Since $K$ is isotropic, $s_0 \leq (n+1)L_K$. Therefore,
    $$
        M\left(\frac{1}{s_0}\right) \leq C \frac{nL_K |K\cap\theta^{\bot}| }{N^{c_1\alpha}\alpha^2(\log N)^2}.
    $$
  By Hensley's result (see \cite{key-H}), $L_K \sim \frac{1}{|K\cap\theta^{\bot}|}$, and because $n\leq N$, we have
    $$
        M\left(\frac{1}{s_0}\right) \leq \frac{ CN }{ N^{c_1\alpha} \alpha^2(\log N)^2} = \frac{ C }{ N^{c_1\alpha-1} \alpha^2(\log N)^2}.
    $$
  Taking $\alpha$ so that $c_1\alpha>2$, we have $M(\tfrac{1}{s_0}) \leq \tfrac{1}{N}$ for $N\geq N_0$ for some $N_0\in \N$ big enough.
\end{proof}

With the method, introduced in Section \ref{Preliminaries}, we are
also able to prove the following general result, which will lead us to
estimates of the support function for some directions of random polytopes in symmetric isotropic convex bodies: 

\begin{thrm} \label{THM general result 2}
  Let $n \leq N \leq e^{\sqrt{n}}$, $K$ be a symmetric isotropic convex body in $\R^n$ and let $X_1,\ldots, X_N$ be independent random variables
  uniformly distributed in $K$. Then,
    $$
       \int_{S^{n-1}} M_{\theta}\left(\frac{1}{C_1L_K \sqrt{\log N}}\right) d\mu(\theta) \leq \frac{1}{N},
    $$
  and
    $$
       \int_{S^{n-1}} M_{\theta}\left(\frac{1}{C_2 L_K \sqrt{\log N}}\right) d\mu(\theta) \geq \frac{1}{N},
    $$
  where $C_1, C_2$ are positive absolute constants.\\
Consequently, if $\tilde{s}$ is chosen such that
     $$
         \int_{S^{n-1}} M_{\theta}(\tfrac{1}{\tilde{s}}) d\mu(\theta) = \frac{1}{N},
     $$
   then $\tilde{s} \sim L_K \sqrt{\log N}$.      
\end{thrm}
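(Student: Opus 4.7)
The plan is to recast the spherical average as an expectation over a joint distribution, and then treat the two bounds by complementary techniques.

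The first step is a reformulation. Using Fubini in the defining integral of $M_\theta$, the inner Lebesgue integral collapses to $(s-1/Y_\theta)_+$ where $Y_\theta=|\langle X_1,\theta\rangle|$, giving $M_\theta(s)=\E(sY_\theta-1)_+$. A further Fubini then yields
\[
\int_{S^{n-1}} M_\theta\!\left(\tfrac{1}{s}\right) d\mu(\theta) \;=\; \E\Big[\big(|\langle X,\theta\rangle|/s-1\big)_+\Big],
\]
with $X$ uniform in $K$ and $\theta$ independent uniform on $S^{n-1}$.

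For the upper bound I would use the elementary inequality $(y-1)_+\le y^k$ valid for $y\ge 0$ and $k\ge 1$, reducing matters to estimating $\E|\langle X,\theta\rangle|^k$. Conditioning first on $X$, rotational invariance together with the sub-Gaussian behaviour of the first coordinate $Z_1$ of a uniform direction give
\[
\E_\theta|\langle X,\theta\rangle|^k \;=\;|X|^k\,\E|Z_1|^k \;\le\; |X|^k\left(\tfrac{Ck}{n}\right)^{k/2}.
\]
Then Paouris' moment estimate for the Euclidean norm of an isotropic log-concave vector yields $(\E|X|^k)^{1/k}\le C\sqrt{n}\,L_K$ for $k\le c\sqrt{n}$, and combining gives $\big(\E|\langle X,\theta\rangle|^k\big)^{1/k}\le C'L_K\sqrt{k}$. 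Taking $k=\log N$---admissible exactly because $N\le e^{\sqrt{n}}$ forces $\log N\le\sqrt{n}$---and $s=C_1L_K\sqrt{\log N}$ with $C_1$ large enough, the ratio becomes $(C'/C_1)^{\log N}\le 1/N$.

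For the lower bound I would use $(y-1)_+\ge\mathbf{1}_{y\ge 2}$ to reduce to showing that $\mathbb{P}(|\langle X,\theta\rangle|\ge 2s)\ge 1/N$. I would first isolate the shell $\{c_0\sqrt{n}\,L_K\le |X|\le C_0\sqrt{n}\,L_K\}$, which carries probability bounded below by an absolute constant (combining Paouris-type moment estimates, a small-ball bound for log-concave densities, and Markov on the upper tail). For $|X|=r$ in this shell, $\mathbb{P}_\theta(|\langle X,\theta\rangle|\ge 2s)=\mathbb{P}(|Z_1|\ge 2s/r)$, and a standard spherical-cap calculation gives $\mathbb{P}(|Z_1|\ge u)\gtrsim e^{-nu^2/2}/(u\sqrt{n})$ in the relevant regime. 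Substituting $u\sim\sqrt{\log N}/\sqrt{n}$ yields a probability of order at least $N^{-c'C_2^2}/\sqrt{\log N}$, which exceeds $1/N$ once $C_2$ is taken sufficiently small and $N$ is large.

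The equivalence $\tilde s\sim L_K\sqrt{\log N}$ then follows immediately from the two bounds and the fact that $s\mapsto\int_{S^{n-1}} M_\theta(1/s)\,d\mu(\theta)$ is non-increasing (each $M_\theta$ is non-decreasing). I expect the main obstacle to be the lower bound: I have to combine shell concentration for $|X|$, the spherical-cap lower tail, and a careful bookkeeping of constants so that the polynomial-in-$N$ decay coming from the spherical cap stays strictly slower than $1/N$, and the polylogarithmic and shell-probability factors do not erase this margin. The upper bound is comparatively clean once Paouris and the sphere's coordinate behaviour are invoked.
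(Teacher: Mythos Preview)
Your argument is correct and takes a genuinely different route from the paper. Both proofs start by passing to a joint $(X,\theta)$ expectation, but the paper does this via the identity
\[
\int_{S^{n-1}} M_\theta\!\left(\tfrac{1}{s}\right)d\mu(\theta)=\int_K M_{\langle\theta,e_1\rangle}\!\left(\tfrac{\|x\|_2}{s}\right)dx,
\]
then computes $M_{\langle\theta,e_1\rangle}$ explicitly as a trigonometric integral $\tfrac{2w_{n-1}}{nw_n}\int_0^{\cos^{-1}(s/\|x\|_2)}\sin^n y/\cos^2 y\,dy$. The upper bound is obtained by integration by parts on this integral and then splitting $K$ into $\{\|x\|_2\le \gamma\sqrt{n}L_K\}$ and its complement, invoking Paouris' tail bound on the large-norm piece; the lower bound comes from an iterated recursion formula for $\int\sin^\alpha\cos^\beta$ together with Paouris' small-ball estimate $|K\setminus c_5\sqrt{n}L_KB_2^n|\ge 1/2$. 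Your reformulation $M_\theta(s)=\E(sY_\theta-1)_+$ bypasses all of this: the crude bounds $(y-1)_+\le y^k$ and $(y-1)_+\ge\mathbf 1_{y\ge 2}$ reduce the problem directly to a moment estimate and a tail estimate for $|\langle X,\theta\rangle|$, which you then handle by combining the sub-Gaussian moments of the spherical coordinate with Paouris' $L_k$-bound on $\|X\|_2$ (upper) and a spherical-cap lower bound on a shell of $|X|$ (lower). The deep inputs are the same---Paouris for large moments/tails, and a lower bound on $\mathbb P(|X|\ge c\sqrt{n}L_K)$---but your packaging is considerably shorter and more probabilistic, avoiding the trigonometric calculus entirely. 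What the paper's explicit representation buys is finer pointwise information about $M_{\langle\theta,e_1\rangle}$, which they exploit elsewhere (for instance to show that the spherical average of $M_\theta(1/s)$ is minimized when $K=D_2^n$); your approach would not yield that corollary directly. One small remark: for the lower bound you do not actually need the two-sided shell---monotonicity of the cap probability in $|X|$ means the one-sided event $\{|X|\ge c_0\sqrt{n}L_K\}$ suffices, and that follows from Paley--Zygmund without invoking a small-ball result.
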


 In order to prove this theorem we need the following proposition:

\begin{prop}
 Let $K$ be a symmetric convex body in $\R^n$ of volume 1. Let $s>0$, $\theta\in S^{n-1}$ and $M_{\theta}$ be the Orlicz function associated to the random variable
 $\skp{X,\theta}$, where $X$ is uniformly distributed in $K$. Then,
   \begin{equation}\label{EQU equivalent representation of integral over M}
       \int_{S^{n-1}} M_{\theta}\left(\frac{1}{s}\right) d\mu(\theta) = \int_{K} M_{\skp{\theta,e_1}}\left(\frac{\norm{x}_2}{s}\right) dx, 
   \end{equation}
 where $M_{\skp{\theta,e_1}}$ is the Orlicz function associated to the random variable $\skp{\theta,e_1}$ with $\theta$ uniformly distributed on $S^{n-1}$.  For any 
 $s\leq \norm{x}_2$
   \begin{equation}\label{EQU formula for M in the general case}
       M_{\skp{\theta,e_1}}\left(\frac{\norm{x}_2}{s}\right) = \frac{2w_{n-1}}{nw_n} \int_0^{\cos^{-1}(\frac{s}{\norm{x}_2})} \frac{\sin^n y}{\cos^2 y} dy,
   \end{equation}
 and $0$ otherwise.      
\end{prop}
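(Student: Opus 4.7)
The plan is to establish the two identities separately, each by a combination of Fubini's theorem and elementary changes of variables.

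For identity~(\ref{EQU equivalent representation of integral over M}), I start from the definition of $M_\theta$ applied to $\skp{X,\theta}$ with $X$ uniform on the volume--one body $K$, so that
\[
M_\theta\!\left(\frac{1}{s}\right)=\int_0^{1/s}\int_K \abs{\skp{x,\theta}}\,\mathbbm{1}_{\{\abs{\skp{x,\theta}}\ge 1/t\}}\,dx\,dt.
\]
Integrating in $\theta\in S^{n-1}$ and swapping the order by Fubini brings $\int_K$ outside. For each fixed $x$, the rotational invariance of $\mu$ implies that $\skp{\theta,x/\norm{x}_2}$ and $\skp{\theta,e_1}$ have the same distribution under $\mu$, so the inner integral becomes the corresponding integral for $e_1$ but with $\norm{x}_2$ factored out. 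A substitution $u=t\norm{x}_2$ in the remaining $t$-integral then shifts the upper limit from $1/s$ to $\norm{x}_2/s$ and recovers exactly the definition of $M_{\skp{\cdot,e_1}}(\norm{x}_2/s)$, giving identity~(\ref{EQU equivalent representation of integral over M}).

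For identity~(\ref{EQU formula for M in the general case}), I use spherical coordinates around $e_1$: writing $\theta=(\cos y,(\sin y)\eta)$ with $\eta\in S^{n-2}$ and $y\in[0,\pi]$, the pushforward density of $y$ under $\mu$ is $\tfrac{(n-1)w_{n-1}}{nw_n}(\sin y)^{n-2}$. Setting $r=\norm{x}_2/s\ge 1$, the event $\{\abs{\skp{\theta,e_1}}\ge 1/t\}$ is empty for $t<1$ and otherwise equal to the symmetric union $[0,\cos^{-1}(1/t)]\cup[\pi-\cos^{-1}(1/t),\pi]$, so
\[
\int_{\{\abs{\skp{\theta,e_1}}\ge 1/t\}}\abs{\skp{\theta,e_1}}\,d\mu(\theta)=\frac{2(n-1)w_{n-1}}{nw_n}\int_0^{\cos^{-1}(1/t)}\cos y\,(\sin y)^{n-2}\,dy,
\]
whose inner integral evaluates to $(1-1/t^2)^{(n-1)/2}/(n-1)$. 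Integrating in $t$ then yields
\[
M_{\skp{\theta,e_1}}(r)=\frac{2w_{n-1}}{nw_n}\int_1^r (1-1/t^2)^{(n-1)/2}\,dt.
\]
The substitution $1/t=\cos y$, under which $dt=\sin y/\cos^2 y\,dy$ and $1-1/t^2=\sin^2 y$, transforms this into $\int_0^{\cos^{-1}(1/r)}\frac{\sin^n y}{\cos^2 y}\,dy$, which is~(\ref{EQU formula for M in the general case}) upon using $1/r=s/\norm{x}_2$. If instead $s>\norm{x}_2$ (i.e.\ $r<1$), the integrand in the definition of $M_{\skp{\theta,e_1}}(r)$ vanishes identically on $[0,r]$, so $M_{\skp{\theta,e_1}}(r)=0$.

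There is no genuine conceptual obstacle; the proof is a direct computation. The only points requiring care are the correct normalization constants for the density of $\skp{\theta,e_1}$ on $S^{n-1}$ (namely the factor $\tfrac{(n-1)w_{n-1}}{nw_n}$) and the bookkeeping of endpoints in the two substitutions $u=t\norm{x}_2$ and $1/t=\cos y$. Once these are set up carefully, both identities drop out.
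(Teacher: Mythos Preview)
Your proposal is correct and follows essentially the same route as the paper: Fubini plus rotational invariance and the substitution $u=t\norm{x}_2$ for the first identity, and the density of $\skp{\theta,e_1}$ on $S^{n-1}$ followed by the change of variables $1/t=\cos y$ for the second. The only cosmetic difference is that the paper integrates directly in the variable $r=\skp{\theta,e_1}$ with density $\tfrac{(n-1)w_{n-1}}{nw_n}(1-r^2)^{(n-3)/2}$ on $(-1,1)$, whereas you parametrize by the polar angle $y$; the two are related by $r=\cos y$ and lead to the same intermediate expression $\tfrac{2w_{n-1}}{nw_n}\int_1^{r}(1-1/t^2)^{(n-1)/2}\,dt$.
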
 
\begin{proof}
 Using the definition of $M_{\theta}$, we obtain
    \begin{eqnarray*}
        &&\int_{S^{n-1}} M_{\theta}\left(\frac{1}{s}\right) d\mu(\theta) \cr 
        & = & \int_{S^{n-1}} \int_0^{\frac{1}{s}} \int_K  \mathbbm 1_{\{\abs{\skp{x,\theta}}\geq \frac{1}{t}\}}(x,\theta,t) 
        \abs{\skp{x,\theta}} dx dt d\mu(\theta) \cr
        & = &  \int_K \int_0^{\frac{1}{s}}  \int_{S^{n-1}}  \mathbbm 1_{\{\abs{\skp{x,\theta}}\geq \frac{1}{t}\}}(x,\theta,t) 
        \abs{\skp{x,\theta}}  d\mu(\theta) dt dx \cr
        & = &  \int_K \int_0^{\frac{\norm{x}_2}{s}}  \int_{S^{n-1}}  \mathbbm 1_{\left\{\abs{\skp{\frac{x}{\norm{x}_2},\theta}} \geq \frac{1}{u}\right\}}(x,\theta,u) 
        \abs{\skp{\frac{x}{\norm{x}_2},\theta}}  d\mu(\theta) du dx,
    \end{eqnarray*}
  where the last equality is obtained by the change of variable $t=\frac{u}{\norm{x}_2}$. Hence, by the rotationally invariance of $S^{n-1}$,
    \begin{equation*} 
        \int_{S^{n-1}} M_{\theta}\left(\frac{1}{s}\right) d\mu(\theta) = \int_K M_{ \skp{\theta,e_1}  }\left(\tfrac{\norm{x}_2}{s}\right) dx.
    \end{equation*}
  Now, let us compute $M_{ \skp{\theta,e_1}  }$. For any $s>1$, otherwise the function is $0$, we have
    \begin{eqnarray*}
      M_{\skp{\theta,e_1}}(s) & = & \int_1^{s} \int_{S^{n-1}\cap \{ \skp{e_1,\theta} \geq \frac{1}{t} \}} d\mu(\theta) dt \\
      & = & 2 \int_1^s \frac{(n-1)w_{n-1}}{nw_{n}} \int_{\frac{1}{t}}^1 r(1-r^2)^{\frac{n-3}{2}} dr dt \\
      & = & \frac{2w_{n-1}}{nw_n} \int_1^s (1-\tfrac{1}{t^2})^{\frac{n-1}{2}} dt. \\
    \end{eqnarray*}
  The change of variables $\frac{1}{t} = \cos y$ yields
    \begin{equation*} 
       M_{\skp{\theta,e_1}}(s) =  \frac{2w_{n-1}}{nw_n} \int_0^{\cos^{-1}(\tfrac{1}{s})} \frac{\sin^n y}{\cos^2 y} dy.
    \end{equation*}    
\end{proof}

Given that the expected mean width of $K_N$ is minimized when $K=D_2^n$, it is natural to expect that given $s$, the average $\int_{S^{n-1}}M_\theta\left(\frac{1}{s}\right)d\mu(\theta)$ would also be minimized when $K=D_2^n$. We prove it, using this representation, in the following:

\begin{cor}
Let $K$ be a symmetric convex body in $\R^n$ of volume 1 and let $s>0$. Then
$$
\int_{S^{n-1}}M_\theta\left(\frac{1}{s}\right)d\mu(\theta)\geq\int_{S^{n-1}}M_{D_2^n,\theta}\left(\frac{1}{s}\right)d\mu(\theta)=M_{D_2^n,e_1}\left(\frac{1}{s}\right),
$$
where $M_{D_2^n,\theta}$ denotes the Orlicz function associated to $D_2^n$.
\end{cor}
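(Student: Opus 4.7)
The plan is to apply the representation (\ref{EQU equivalent representation of integral over M}) to both sides and thereby reduce the statement to a standard radial rearrangement comparison. For the right-hand equality, rotational invariance of $D_2^n$ shows that $\skp{X,\theta}$ has the same distribution for every $\theta\in S^{n-1}$ when $X$ is uniform on $D_2^n$; hence $M_{D_2^n,\theta}=M_{D_2^n,e_1}$ as Orlicz functions, and the average over $S^{n-1}$ collapses to this common value.

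For the inequality, I would apply (\ref{EQU equivalent representation of integral over M}) to both the left-hand side and its analogue for $D_2^n$, writing each as $\int_B F(x)\,dx$ with $B\in\{K,D_2^n\}$ and
$$
F(x) := M_{\skp{\theta,e_1}}\!\left(\tfrac{\norm{x}_2}{s}\right).
$$
By (\ref{EQU formula for M in the general case}), $F$ depends only on $\norm{x}_2$, vanishes for $\norm{x}_2\leq s$, and is non-decreasing in $\norm{x}_2$ (it is an Orlicz function composed with a monotone argument). The task therefore reduces to the claim that
$$
\int_K F(x)\,dx \;\geq\; \int_{D_2^n} F(x)\,dx
$$
whenever $F$ is radial and non-decreasing and $\abs{K}=\abs{D_2^n}=1$.

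The remaining step is a one-line rearrangement argument. Let $r_0$ denote the radius of $D_2^n$. Split $K=(K\cap D_2^n)\cup(K\setminus D_2^n)$ and $D_2^n=(K\cap D_2^n)\cup(D_2^n\setminus K)$, and observe that $\abs{K\setminus D_2^n}=\abs{D_2^n\setminus K}$ by volume balance. On $K\setminus D_2^n$ every point satisfies $\norm{x}_2\geq r_0$, so $F(x)\geq F(r_0 e_1)$; on $D_2^n\setminus K$ every point satisfies $\norm{x}_2\leq r_0$, so $F(x)\leq F(r_0 e_1)$. Integrating, using the volume equality, and adding back $\int_{K\cap D_2^n}F$ to both sides yields the desired inequality.

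I do not anticipate a serious obstacle here, as all the hard machinery has already been established in (\ref{EQU equivalent representation of integral over M}) and (\ref{EQU formula for M in the general case}); the argument is essentially a Hardy--Littlewood-type rearrangement against the indicator $\mathbbm{1}_K$. The only minor care needed is the degenerate case $s\geq r_0$, where $F\equiv 0$ on $D_2^n$ and the inequality is immediate from $F\geq 0$ on $K$.
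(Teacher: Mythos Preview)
Your proposal is correct and follows essentially the same route as the paper: apply (\ref{EQU equivalent representation of integral over M}) to reduce to a radial integrand, then use the split $K=(K\cap D_2^n)\cup(K\setminus D_2^n)$ together with $|K\setminus D_2^n|=|D_2^n\setminus K|$ and the monotonicity of $M_{\skp{\theta,e_1}}$ to compare against the value at the boundary radius. The paper's proof is line-for-line the rearrangement you describe, and your remark on the equality via rotational invariance and on the degenerate case $s\geq r_0$ are harmless additions.
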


\begin{proof}
By (\ref{EQU equivalent representation of integral over M}) and the facts that $M_{\skp{\theta,e_1}}$ is increasing  and $|K|=|D_2^n|=1$ we have that if $r_n$ is the radius of $D_2^n$
\begin{eqnarray*}
\int_{S^{n-1}} M_{\theta}\left(\frac{1}{s}\right) d\mu(\theta) &= &\int_{K} M_{\skp{\theta,e_1}}\left(\frac{\norm{x}_2}{s}\right) dx\cr
&=&\int_{K\cap D_2^n} M_{\skp{\theta,e_1}}\left(\frac{\norm{x}_2}{s}\right) dx+\int_{K\backslash D_2^n} M_{\skp{\theta,e_1}}\left(\frac{\norm{x}_2}{s}\right) dx\cr
&\geq&\int_{K\cap D_2^n} M_{\skp{\theta,e_1}}\left(\frac{\norm{x}_2}{s}\right) dx+|K\backslash D_2^n|M_{\skp{\theta,e_1}}\left(\frac{r_n}{s}\right) \cr
&=&\int_{K\cap D_2^n} M_{\skp{\theta,e_1}}\left(\frac{\norm{x}_2}{s}\right) dx+|D_2^n\backslash K|M_{\skp{\theta,e_1}}\left(\frac{r_n}{s}\right) \cr
&\geq&\int_{K\cap D_2^n} M_{\skp{\theta,e_1}}\left(\frac{\norm{x}_2}{s}\right) dx+\int_{D_2^n\backslash K} M_{\skp{\theta,e_1}}\left(\frac{\norm{x}_2}{s}\right) dx\cr
&=&\int_{D_2^n} M_{\skp{\theta,e_1}}\left(\frac{\norm{x}_2}{s}\right) dx=\int_{S^{n-1}} M_{D_2^n,\theta}\left(\frac{1}{s}\right) d\mu(\theta) .
\end{eqnarray*}
\end{proof}
Now, we give the proof of Theorem \ref{THM general result 2}:

\begin{proof}
  By (\ref{EQU formula for M in the general case}), if $\norm{x}_2 \geq s$, we have
    \begin{eqnarray*}
      M_{\skp{\theta,e_1}}\left(\frac{\norm{x}_2}{s}\right) 
      & = &  \frac{2w_{n-1}}{nw_n} \int_0^{\cos^{-1}(\tfrac{s}{\norm{x}_2})} \frac{\sin^n y}{\cos^2 y} dy.
    \end{eqnarray*}
  Integration by parts yields
    \begin{eqnarray*}
      M_{\skp{\theta,e_1}}\left(\frac{\norm{x}_2}{s}\right) 
      & = &  \frac{2w_{n-1}}{nw_n} \left[ \frac{(\sin y)^{n-1}}{\cos y} \Big|_{0}^{\cos^{-1}(\tfrac{s}{\norm{x}_2})} 
      - (n-1) \int_0^{\cos^{-1}(\tfrac{s}{\norm{x}_2})} (\sin y)^{n-2} dy \right] \\
      & = & \frac{2w_{n-1}}{nw_n} \left[ \frac{\norm{x}_2}{s} \left(1-\frac{s^2}{\norm{x}_2^2}\right)^{\frac{n-1}{2}}  
      - (n-1) \int_0^{\cos^{-1}(\tfrac{s}{\norm{x}_2})} (\sin y)^{n-2} dy \right] .
    \end{eqnarray*}
  We start with the upper bound where we will use Paouris' result about the concentration of mass on isotropic convex bodies from \cite{key-P}. First of all, we have
    $$ 
        M_{\skp{\theta,e_1}}\left(\frac{\norm{x}_2}{s}\right) \leq \frac{2w_{n-1}}{nw_n} 
        \frac{\norm{x}_2}{s} \left(1-\frac{s^2}{\norm{x}_2^2}\right)^{\frac{n-1}{2}}.
    $$ 
  From (\ref{EQU equivalent representation of integral over M}) and since $M_{\skp{\theta,e_1}}\left(\frac{\norm{x}_2}{s}\right) = 0$ for $s>\norm{x}_2$, we get
    \begin{eqnarray*}
        \int_{S^{n-1}} M_{\theta}(\tfrac{1}{s}) d\mu(\theta) & \leq & 
        \int_{K\setminus sB_2^n} \frac{2w_{n-1}}{nw_n} \frac{\norm{x}_2}{s} \left(1-\frac{s^2}{\norm{x}_2^2}\right)^{\frac{n-1}{2}} dx \\
        & \leq &\int_{K\setminus sB_2^n} \frac{2w_{n-1}}{nw_n} \frac{\norm{x}_2}{s} e^{-\frac{n-1}{2}\frac{s^2}{\norm{x}_2^2}} dx \\
        & \leq &\int_{K} \frac{2w_{n-1}}{nw_n} \frac{\norm{x}_2}{s} e^{-\frac{n-1}{2}\frac{s^2}{\norm{x}_2^2}} dx.
    \end{eqnarray*} 
  We choose $s_0 = \sqrt{\alpha} L_K \sqrt{\log N}$, with $\alpha>0$ a constant to be chosen later. Then, if $N \leq e^{\sqrt{n}}$,
    \begin{eqnarray*}
        \int_{S^{n-1}} M_{\theta}(\tfrac{1}{s_0}) d\mu(\theta) & \leq & \frac{2w_{n-1}}{nw_n} \frac{1}{\sqrt{\alpha} L_K \sqrt{\log N}}
       \int_K \norm{x}_2 e^{-\frac{c_1\alpha n L_K^2}{\norm{x}_2^2} \log N} dx \\
       & = & \frac{2w_{n-1}}{nw_n} \frac{1}{\sqrt{\alpha} L_K \sqrt{\log N}} 
       \left[ \int_{K\cap \gamma\sqrt{n}L_KB_2^n} \norm{x}_2 e^{-\frac{c_1\alpha n L_K^2}{\norm{x}_2^2} \log N} dx + \right. \\
       && \left. + \int_{K\setminus \gamma\sqrt{n}L_KB_2^n} \norm{x}_2 e^{-\frac{c_1\alpha n L_K^2}{\norm{x}_2^2} \log N} dx  \right] \\
       & \leq & \frac{2w_{n-1}}{nw_n} \frac{1}{\sqrt{\alpha} L_K \sqrt{\log N}} 
       \left[ \frac{\gamma \sqrt{n}L_K}{N^{\frac{c_1\alpha}{\gamma^2}}} + n L_K e^{-c_1\sqrt{n}\gamma}\right] \\
       & \leq & \frac{C}{\sqrt{\alpha}\sqrt{\log N}} \left[ \frac{\gamma}{N^{\frac{c_1\alpha}{\gamma^2}}} + \frac{\sqrt{n}}{N^{c_1\gamma}} \right] \\
       & \leq & \frac{C}{\sqrt{\alpha}\sqrt{\log N}} \left[ \frac{\gamma}{N^{\frac{c_1\alpha}{\gamma^2}}} + \frac{1}{N^{c_1\gamma-\frac{1}{2}}} \right]. \\
    \end{eqnarray*} 
  We choose $\gamma>0$ such that $c_1\gamma - \frac{1}{2}>1$ and then $\alpha>0$ so that $\frac{c_1\alpha}{\gamma^2}>1$. Then, 
    $$
         \int_{S^{n-1}} M_{\theta}(\tfrac{1}{s_0}) d\mu(\theta)  \leq  \frac{C}{\sqrt{\alpha}\sqrt{\log N}} \left[ \frac{\gamma}{N^{\frac{c_1\alpha}{\gamma^2}}} + \frac{1}{N^{c_1\gamma-\frac{1}{2}}} \right] \\
         \leq \frac{1}{N},
    $$
  for $N\leq e^{\sqrt{n}}$ and $N \geq N_0$.   \\
  To prove the lower bound we use the recursion formula  (\ref{COR General Recursion for Sin k steps}).  For $\norm{x}_2 \geq s $, 
  and any $k\in \N$,
    \begin{eqnarray*}
      && M_{\skp{\theta,e_1}}\left(\frac{\norm{x}_2}{s}\right) \\
      & \geq & \frac{2w_{n-1}}{nw_n} \frac{\norm{x}_2}{s(n+1)} \left[ \left(1-\frac{s^2}{\norm{x}_2^2}\right)^{\frac{n+1}{2}} 
      + \left(1-\frac{3}{n+3} \right)\left(1-\frac{s^2}{\norm{x}_2^2}\right)^{\frac{n+3}{2}}+ \right. \\
      && \left. + \cdots + \left(1-\frac{3}{n+3} \right)\cdots\left(1-\frac{3}{n+2k+1} \right)\left(1-\frac{s^2}{\norm{x}_2^2}\right)^{\frac{n+2k+1}{2}} \right] \\
      & \geq & \frac{2w_{n-1}}{nw_n} \frac{\norm{x}_2(k+1)}{s(n+1)}\left(1-\frac{3}{n+2k+1} \right)^{k}\left(1-\frac{s^2}{\norm{x}_2^2}\right)^{\frac{n+2k+1}{2}}.
    \end{eqnarray*} 
  Taking $k=n$
    \begin{eqnarray*}
        M_{\skp{\theta,e_1}}\left(\frac{\norm{x}_2}{s}\right) & \geq &  \frac{2w_{n-1}}{nw_n} \frac{\norm{x}_2}{s} \left(1-\frac{3}{n+2k+1} \right)^{n}
        \left(1-\frac{s^2}{\norm{x}_2^2}\right)^{\frac{3n+1}{2}} \\
        & \geq & \frac{Cw_{n-1}}{nw_n} \frac{\norm{x}_2}{s}\left(1-\frac{s^2}{\norm{x}_2^2}\right)^{\frac{3n+1}{2}}.
    \end{eqnarray*} 
  Thus
    \begin{eqnarray*}
      \int_{S^{n-1}} M_{\theta}(\tfrac{1}{s}) d\mu(\theta) 
      & \geq & \int_{K\setminus sB_2^n} \frac{Cw_{n-1}}{nw_n} \frac{\norm{x}_2}{s}\left(1-\frac{s^2}{\norm{x}_2^2}\right)^{\frac{3n+1}{2}} dx \\
      & \geq &\int_{K\setminus 2sB_2^n} \frac{Cw_{n-1}}{nw_n}  \frac{\norm{x}_2}{s}\left(1-\frac{s^2}{\norm{x}_2^2}\right)^{\frac{3n+1}{2}} dx \\
      & \geq & \int_{K\setminus 2sB_2^n} \frac{Cw_{n-1}}{nw_n}  \frac{\norm{x}_2}{s}e^{-c_4n\frac{s^2}{\norm{x}_2^2}} dx.  
    \end{eqnarray*} 
  Take $s_1 = \sqrt{\beta} L_K \sqrt{\log N}$, $\beta>0$ a constant to be chosen later. Then
    $$
        \int_{S^{n-1}} M_{\theta}(\tfrac{1}{s_1}) d\mu(\theta) \geq 
        \int_{K\setminus 2\sqrt{\beta} L_K \sqrt{\log N}B_2^n} \frac{Cw_{n-1}}{nw_n}  \frac{\norm{x}_2}{\sqrt{\beta} L_K \sqrt{\log N}}
        e^{-c_4n\frac{\beta L_K^2\log N}{\norm{x}_2^2}} dx.
    $$ 
  Using the small ball probability result proved in \cite{key-P2} there exists a constant $c_5>0$ such that 
    $$
        |K\setminus c_5\sqrt{n}L_K B_2^n| \geq \frac{1}{2},
    $$
  for $N\leq e^{n}$. Therefore,
    \begin{eqnarray*}
       && \int_{K\setminus 2\sqrt{\beta} L_K \sqrt{\log N}B_2^n} \frac{Cw_{n-1}}{nw_n}  \frac{\norm{x}_2}{\sqrt{\beta} L_K \sqrt{\log N}}
        e^{-c_4n\frac{\beta L_K^2\log N}{\norm{x}_2^2}} dx \\
        & \geq & \int_{K\setminus c_5\sqrt{n} L_K B_2^n} \frac{Cw_{n-1}}{nw_n}  \frac{\norm{x}_2}{\sqrt{\beta} L_K \sqrt{\log N}}
        e^{-c_4n\frac{\beta L_K^2\log N}{\norm{x}_2^2}} dx \\
        & \geq & \frac{C'}{\sqrt{\beta}\sqrt{\log N}} e^{-c_6\beta \log N} |K\setminus c_5\sqrt{n} L_K B_2^n| \\
        & \geq & \frac{C''}{N^{c_6\beta}\sqrt{\beta}\sqrt{\log N}},
    \end{eqnarray*}
  where the inequality before the last one holds because $\norm{x}_2^2 \geq c_5^2 n L_K^2$. We take $\beta$ small enough, so that $c_6\beta<1$ and 
  $2\sqrt{\beta}\sqrt{\log N} \leq c_5 \sqrt{n}$. Then
    $$
        \frac{C''}{N^{c_6\beta}\sqrt{\beta}\sqrt{\log N}} \geq \frac{1}{N},
    $$
  for $N\geq N_0$ and $N \leq e^{n}$. Hence,
    $$
        \int_{S^{n-1}} M_{\theta}(\tfrac{1}{s_1}) d\mu(\theta) \geq \frac{1}{N}.
    $$              
\end{proof}

Obviously, the theorem implies that there are directions $\theta_1,
\theta_2 \in S^{n-1}$ such that the expectation of the support
function in those directions is bounded from above and below
respectively by a constant times $L_K\sqrt{\log N}$. In
Corollary \ref{COR measure estimate} we give estimates for the measure of the set of directions
verifying such estimates. However, we don't think that the estimate we
give for the measure of the set of directions verifying the lower bound
is optimal.

\begin{proof}[of Corollary \ref{COR measure estimate}]
  To prove that the upper bound is true for most directions we
  proceed as in the proof of Theorem \ref{THM general result 2}. We choose $s_0$ like there, and $\alpha$, $\gamma$ so that $c_1\gamma - \frac{1}{2}> 2(r+1)$ and 
  $\frac{c_1\alpha}{\gamma^2}>2(r+1)$ and obtain
    $$
        \int_{S^{n-1}} M_{\theta}(\tfrac{1}{s_0}) d\mu(\theta) \leq \frac{1}{N^{r+1}}.
    $$
  Then, by Chebychev's inequality,
    $$
        \frac{1}{N^{r+1}} \geq  \int_{S^{n-1}} M_{\theta}(\tfrac{1}{s_0}) d\mu(\theta) \geq \frac{1}{N} \mu \left\{ \theta \in S^{n-1} : M_{\theta}\left( \frac{1}{s_0} \right) > \frac{1}{N} \right\}.
    $$
  Thus
     $$
         \mu \left\{ \theta \in S^{n-1} : M_{\theta}\left( \frac{1}{s_0} \right) \leq \frac{1}{N} \right\} \geq 1-\frac{1}{N^{r}}
     $$    
and so
$$
   \mu \left\{ \theta \in S^{n-1} :\E h_{K_N}(\theta) \leq
     C_1(r)L_K\sqrt{\log N} \right\} \geq 1-\frac{1}{N^{r}}.
$$
To prove the probability estimate for the lower bound we can assume
that $r<1$. We proceed as
in  Theorem \ref{THM general result 2}. We choose $s_1$ like there and take $\beta$ small enough
so that $c_6\beta<r$. We obtain
  $$
      \int_{S^{n-1}} M_{\theta}\left( \frac{1}{s_1} \right) d\mu(\theta) > \frac{1}{N^{r}}.
  $$
 Then, for any  decreasing, positive and concave function $f$ we get
  $$
       f\left( \int_{S^{n-1}} M_{\theta}\left( \frac{1}{s_1} \right) d\mu(\theta) \right) < f\left( \frac{1}{N^{r}} \right).
  $$
Using Jensen's inequality this yields
  \begin{eqnarray*}
    f\left( \frac{1}{N^{r}} \right) & \geq &  \int_{S^{n-1}} f\left( M_{\theta}\left( \frac{1}{s_1} \right) \right) d\mu(\theta) \\
    & \geq & f\left( \frac{1}{N} \right) \mu\left\{ \theta\in S^{n-1} : f\left( M_{\theta}\left( \frac{1}{s_1} \right) \right) > f\left( \frac{1}{N}\right) \right\} \\
    & = & f\left( \frac{1}{N} \right) \mu\left\{ \theta\in S^{n-1} :  M_{\theta}\left( \frac{1}{s_1} \right)  <  \frac{1}{N} \right\} .
  \end{eqnarray*} 
Thus
  $$
      \mu\left\{ \theta\in S^{n-1} :  M_{\theta}\left( \frac{1}{s_1} \right)  <  \frac{1}{N} \right\} \leq \frac{f\left( \frac{1}{N^{r}} \right)}{f\left( \frac{1}{N} \right)},
  $$
and therefore
  $$
      \mu\left\{ \theta\in S^{n-1} :  M_{\theta}\left( \frac{1}{s_1} \right)  \geq   \frac{1}{N} \right\} \geq 1- \frac{f\left( \frac{1}{N^{r}} \right)}{f\left( \frac{1}{N} \right)}.
  $$
This means that
  $$
       \mu\left\{ \theta\in S^{n-1} :  \E h_{K_N}(\theta) \geq cs_1 \right\} \geq 1- \frac{f\left( \frac{1}{N^{r}} \right)}{f\left( \frac{1}{N} \right)}.
  $$
We choose $f(t) = -at + a \max_{\theta\in S^{n-1}} M_{\theta}(\tfrac{1}{s_1})$, $a>0$. Then
  $$
      \frac{f\left( \frac{1}{N^{r}} \right)}{f\left( \frac{1}{N} \right)} = \frac{-\frac{1}{N^{r}}+\max_{\theta\in S^{n-1}} M_{\theta}(\tfrac{1}{s_1})}{-\frac{1}{N}+\max_{\theta\in S^{n-1}} M_{\theta}(\tfrac{1}{s_1})},
  $$
and thus
  $$
      1- \frac{f\left( \frac{1}{N^{r}} \right)}{f\left( \frac{1}{N} \right)} = \frac{\frac{1}{N^{r}} - \frac{1}{N} }{\max_{\theta\in S^{n-1}} M_{\theta}(\tfrac{1}{s_1})-\frac{1}{N}}.
  $$
From H\"older's inequality we obtain
  \begin{eqnarray*}
      M_{\theta}\left( \frac{1}{s_1}\right) & = & \int_0^{\frac{1}{s_1}} \int_{K\cap \{ \abs{ \skp{x,\theta}} \geq \frac{1}{t} \} } \abs{\skp{x,\theta} } dx dt \\
      & \leq &  \int_0^{\frac{1}{s_1}} \int_{K} \abs{\skp{x,\theta} } dx dt \\
      & \leq & \int_0^{\frac{1}{s_1}} L_K dt ~=~ \frac{L_K}{s_1}.
  \end{eqnarray*} 
Because of our choice of $s_1$ we get
  $$
      M_{\theta}\left( \frac{1}{s_1}\right) \leq \frac{C(r)}{\sqrt{\log N}}.
  $$   
Therefore
  $$
      1- \frac{f\left( \frac{1}{N^{r}} \right)}{f\left( \frac{1}{N} \right)} \geq \frac{\frac{1}{N^{r}} - \frac{1}{N} }{\frac{C(r)}{\sqrt{\log N}}-\frac{1}{N}} \geq \frac{C'(r)\sqrt{\log N}}{N^{r}}.
  $$
This yields
  $$
       \mu\left\{ \theta\in S^{n-1} :  \E h_{K_N}(\theta) \geq  C_2(r)L_K \sqrt{\log N} \right\} \geq \frac{C(r)\sqrt{\log N}}{N^{r}}.
  $$
\end{proof}

\proof[Acknowledgements]
This work was done while the authors were postdoctoral fellows at the Department of Mathematical and Statistical Sciences at University of Alberta. We would like to thank the department for providing such good environment and working conditions. Especially, we would like to thank Nicole Tomczak-Jaegermann and Alexander Litvak for pointing out the problem to us and for useful comments.

\end{document}